\theoremstyle{definition}
\newtheorem{definition}{Definition}[section]
\theoremstyle{plain}
\newtheorem{theorem}{Theorem}
\newtheorem{lemma}{Lemma}
\numberwithin{lemma}{section}
\numberwithin{theorem}{section}
\numberwithin{equation}{section}
\DeclareMathOperator{\skmin}{\mathcal{S}_k^{\min}}
\DeclareMathOperator{\sknew}{\mathcal{S}_k^{\text{new}}}
\DeclareMathOperator{\skfull}{\mathcal{S}_k}
\DeclareMathOperator{\gl2}{GL_2}
\DeclareMathOperator{\sl2}{SL_2}
\DeclareMathOperator{\cond}{\mathfrak{f}}
\title{Twist-minimal trace formula for holomorphic cusp forms}
\author{Kieran Child \\ University of Bristol}
\email{kieran.child@bristol.ac.uk}
\address{University of Bristol \\ Beacon House \\ Queens Road \\ Bristol \\ BS8 1QU \\ UK}
\begin{document}

\begin{abstract}
We derive an explicit formula for the trace of an arbitrary Hecke operator on spaces of twist-minimal holomorphic cusp forms with arbitrary level and character, and weight at least 2. We show that this formula provides an efficient way of computing basis elements for newform or cusp form spaces. This work was motivated by the development of a twist-minimal trace formula in the non-holomorphic case by Booker, Lee and Str\"ombergsson, as well as the presentation of a fully generalised trace formula for the holomorphic case by Cohen and Str\"omberg.
\end{abstract}

\maketitle

\section{Introduction}
Modular forms play a central role in much of modern number theory. Their properties have been crucial in establishing the modularity theorem (in turn proving Fermat's last theorem, see \cite{FermatsLastTheorem}), and the Fourier coefficients of modular forms provide explicit data for problems such as the congruent number problem, representations of integers by quadratic forms, and classifications of Galois representations (see \cite{ZagierApplications} for an overview of several applications).

Cusp forms are of particular interest for many reasons. For example, cusp forms give rise to holomorphic $L$-functions (which in turn account for all $L$-functions arising from elliptic curves over $\mathbb{Q}$) and are associated with families of irreducible $\ell$-adic Galois representations following work by Deligne and Serre (see \cite[Section 9.6]{DiamondShurman}).

While there exist simple, explicit expressions for Fourier coefficients of Eisenstein series, such as in \cite{Young}, basis elements for spaces of cusp forms are trickier to compute. In this paper we derive a formula for the trace of Hecke operators on spaces of so-called `twist-minimal' forms (Theorem \ref{MainTheorem}), and demonstrate that this is sufficient to recover the Fourier coefficients of basis elements of cusp form spaces (Theorem \ref{StructuralTheorem}). We give a result enabling the extraction of Fourier coefficients of newforms from twist-minimal representatives (Theorem \ref{thm:twistedcoeffs}).

The value of this paper is twofold. Firstly, the formula is given in an explicit, classical form, and so the terms are readily computable. Secondly, the sieving of the formula to twist-minimal spaces results in much simpler expressions to compute these spaces, and allows us to calculate a smaller number of twist-minimal spaces in order to recover a large number of different cusp form spaces at once. This efficiently provides explicit data for aforementioned problems on representation of integers and Galois representations. The results can also be incorporated into Schaeffer's method for computing weight-1 spaces, given in \cite{Schaeffer}. Further work implementing the results of this paper for the efficient computation of all forms of weight 1 and level at most 10,000 is currently in progress. A full explanation and discussion of the advantages of this method is given at the end of section \ref{Results}.

The idea of using trace formulae to derive information on modular forms dates back to the 1950s. Following the construction of certain non-holomorphic cusp forms, now called Maass cusp forms, by Maass in \cite{Maass}, Selberg sought to demonstrate the infinitude of such forms and provide an asymptotic expression for their count, which he achieved with a trace formula in \cite{Selberg}. He and Eichler continued work in this direction, leading to a trace formula for the action of Hecke operators $T_n$ on spaces of holomorphic cusp forms for the full modular group and square-free $n$ in \cite{Eichler}. Following work by others in \cite{Hijikata} and \cite{Oesterle}, this formula is now available in full generality for traces of any Hecke operator on spaces of any level and character. Although this formula can be used to find Fourier coefficients for basis elements of cusp form spaces (as done in \cite{LMFDBpaper}, with a newform sieve, for the generation of LMFDB data), our formula contains much simpler expressions, and is more efficient for this task in the case that the level $N$ is not square-free.

On the structural side, twists of modular forms were studied in \cite{AtkinLi} and the concept of a twist-minimal form\footnote{In \cite{AtkinLi} this would be termed a `$q$-primitive form for all prime $q$.'} was explored. In \cite{Palm}, Palm presented an adèlic trace formula for twist-minimal forms. This approach was also taken in the classical context in \cite{BookerLeeStrombergsson} for Hecke operators $T_{\pm 1}$ and Maass cusp forms. The pathway between the adèlic and classical contexts is shown in \cite{KnightlyLi}. This motivates us to use the structural idea of restricting to twist-minimal forms in a classical setting for holomorphic cusp forms. The result will be a computable analogue of the adèlic formula in \cite{Palm}.

The structure of the paper is as follows. Section 2 covers the required background theory leading to the presentation of two results. The formula for the trace of Hecke operators on spaces of twist-minimal modular forms is given in Theorem \ref{MainTheorem}, and the extraction of basis elements for twist-minimal, newform or cusp form spaces from this trace formula is given in Theorem \ref{StructuralTheorem}. A formula for the Fourier coefficients of an arbitrary newform given Fourier coefficients of a suitable twist-minimal form is given in Theorem \ref{thm:twistedcoeffs}. The remainder of the paper explains how these theorems are arrived at, with section \ref{StructuralSection} addressing the structural theory, and section \ref{traceformula} deriving the trace formula.
\subsection*{Acknowledgements}
I would like to thank Min Lee and Andrew Booker for their guidance and suggestions with this research.
\section{Twist-minimal trace formula}
\label{Results}
\subsection{Preliminary theory}
For any $N \in \mathbb{N}$, a Dirichlet character $\chi$ is a homomorphism:
\begin{equation}
\chi : \left(\mathbb{Z}/N\mathbb{Z}\right)^* \rightarrow \mathbb{C}^*.
\end{equation}
The domain of $\chi$ is extended to $\mathbb{Z}$ by the canonical mod-$N$ homomorphism, with $\chi(x)=0$ whenever $(N,x) \not = 1$. The number $N$ in this construction is called the level of the character. The conductor of $\chi$, denoted $\cond(\chi)$, is the least $M \in \mathbb{N}$ such that $a \equiv b \pmod{M}$ implies $\chi(a)=\chi(b)$.

If $\cond(\chi)=N$ then we say $\chi$ is primitive. For any $N$, if $\cond(\chi)=1$ then we say $\chi$ is trivial mod $N$. This character will be written as $\mathbbm{1}$ with the level taken from context. Dirichlet characters factor over primes in the following sense. Let $\nu_p(x)$ be the maximum power of $p$ dividing $x$. There exists for all $p\mid N$ a unique $\chi_p$ with level $p^{\nu_p(N)}$ so that:
\begin{equation}
\chi(x)=\prod_{p\mid N}\chi_p(x).
\end{equation}
We use this factorisation to define $\chi_p$ for any prime $p\mid N$. The order of a non-trivial character is the least $n \in \mathbb{N}$ such that $\chi^n=\mathbbm{1}$. This is denoted Ord$(\chi)$. We define twist-minimal characters as in \cite[Definition 1.5]{BookerLeeStrombergsson}.
\begin{definition}\label{tmcdef}
Fix a prime $p$ and $e \in \mathbb{N}$. Let $\chi$ be a Dirichlet character level $p^e$, and fix $s \in \mathbb{Z}_{\ge 0}$ so that $\cond(\chi)=p^s$. We say that $\chi$ is twist-minimal if and only if one of the following holds:
\begin{itemize}
\item $\chi$ is primitive;
\item $p>2$ and $\chi=\mathbbm{1}$;
\item $p>2$ and $\text{Ord}(\chi)=2^{\nu_2(p-1)}$;
\item $p=2$ and $s= \lfloor \frac{e}{2} \rfloor$;
\item $p=2$, $s=2$, $e>3$ and $2\nmid e$;
\item $p=2$, $\chi=\mathbbm{1}$, and $2 \nmid e$ or $e=2$.
\end{itemize}
In general, we define a twist-minimal character $\chi$ as a product $\prod \chi_p$ where all of the $\chi_p$ are twist-minimal.
\end{definition}
This completes the prerequisites on Dirichlet characters, and we move onto modular forms. Let $\mathbb{H}$ be the Poincar\'e upper half plane $\mathbb{H}= \left\{ z \in \mathbb{C} : \Im(z) > 0 \right\}.$ We get an action on $\mathbb{H}$ by M\"obius transformations with notation:
\begin{equation}
\gamma z = \frac{az+c}{cz+d},\;\;\;\;\;\;\; \forall \; \gamma = \begin{pmatrix} a & b \\ c & d \end{pmatrix} \in \gl2(\mathbb{R})^+.
\end{equation}
For any complex-valued function $f$ the weight $k$ slash action of $\gamma = \begin{pmatrix} a & b \\ c & d \end{pmatrix} \in \gl2(\mathbb{R})^+$ on $f$ is defined as:
\begin{equation}
f|_k \gamma(z)=(\text{det}\gamma)^{k/2}(cz+d)^{-k}f(\gamma z).
\end{equation}
For any $N \in \mathbb{N}$, the congruence subgroup $\Gamma_0(N)$ is defined to be the following subgroup of $\sl2(\mathbb{Z})$:
\begin{equation}
\Gamma_0(N)=\left\{ \begin{pmatrix} a & b \\ c & d \end{pmatrix} \in \sl2(\mathbb{Z}): N\mid c \right\}.
\end{equation}
Fix $k, N \in \mathbb{N}$ and fix $\chi$ a Dirichlet character mod $N$. A modular form of weight $k$, level $N$ and character $\chi$ is a complex-valued, holomorphic function $f$, defined on $\mathbb{H}$, which satisfies:
\begin{enumerate}
\item $f|_k\gamma(z)=\chi(d)f(z)$ for all $\gamma \in \Gamma_0(N)$.
\item $\int_{\Gamma_0(N) \backslash \mathbb{H}}|f|^2 \frac{dxdy}{y^2}$ is finite.
\end{enumerate}
Modular forms have Fourier expansions:
\begin{equation}
f(z)=\sum_{n \ge 0}a_n e^{2\pi i n z}.
\end{equation}
Such an expansion completely describes the form, and so it is these $a_n$ coefficients which we seek to compute.

All modular forms of weight $k$, level $N$ and character $\chi$ form a space, denoted $\mathcal{M}_k(N,\chi)$. An important family of operators on this space are the Hecke operators. We follow the definition and notation of \cite[Section 6.5]{IwaniecTopics}. Let $f$ be a modular form of weight $k$ and character $\chi$. For any $n \in \mathbb{N}$ we define the generic Hecke operator\footnote{This definition is more general than in the original literature, where for $p\mid N$ our operator $T_p$ would be written $U_p$.} as:
\begin{equation}
T_n^\chi(f) = \frac{1}{n}\sum_{ad=n}\chi(a)a^k \sum_{b\text{ mod }d}f\left( \frac{az + b}{d}\right).
\end{equation}
Here, $\chi$ is included in the notation for clarity, even though this is determined by $f$. Hecke operators are commutative and multiplicative, so that $T_{nm}^\chi=T_n^\chi T_m^\chi$ whenever $(n,m)=1$. We aim to give an explicit expression for the trace of these operators on subspaces of modular forms called cusp forms, and we show these traces can be used to generate basis elements.

A cusp form is a modular form for which the slash action $f|_k\gamma(z)$ tends to 0 as $\Im(z)\rightarrow \infty$ for all $\gamma \in \Gamma_0(N)$. Cusp forms of weight $k$, level $N$ and character $\chi$ form a subspace of modular forms, denoted $\mathcal{S}_k(N,\chi)$. This subspace is stable under the action of Hecke operators, and is a Hilbert space with respect to the Petersson inner product, defined as:
\begin{equation}
\langle f,g \rangle_N := \int_{\Gamma_0(N)\backslash \mathbb{H}} f(z)\overline{g(z)}y^k \frac{dxdy}{y^2}.
\end{equation}
This inner product is used to decompose the space of cusp forms. Fix $M|N$ and $d|\frac{N}{M}$, then it is verified that for any level $M$ cusp form $f$ the function $g(z)=f(dz)$ is a level $N$ cusp form with the same weight and character as $f$. Forms of level $N$ which arise in this manner from any level $M<N$ are called oldforms, and the orthogonal complement (with respect to the Petersson inner product) of the space spanned by oldforms is called the newform space. It is denoted $\sknew(N,\chi)$, and was studied in detail in \cite{AtkinLehner} and \cite{AtkinLi}. The utility of this decomposition is that one can recover data on cusp form spaces solely from data on newform spaces, and so the scope of any study into cusp forms is reduced.

A cusp form which is an eigenfunction for all Hecke operators is called a Hecke-eigenform. Many results about the corresponding Hecke-eigenvalues are given in \cite{AtkinLi} and \cite{Ogg}. We say that two Hecke-eigenforms in $\skfull(N,\chi)$ are equivalent if they have the same eigenvalues for all but finitely many $T_p$. We thus normalise these forms by setting their first Fourier coefficient to 1. For a normalised Hecke-eigenform $f$, the eigenvalue of $T_n^\chi$ is equal to the $n$-th Fourier coefficient of $f$.

By \cite[Lemma 18]{AtkinLehner} there is a basis for $\sknew(N,\chi)$ consisting of representatives from distinct equivalence classes of simultaneous eigenforms. In the other direction, by \cite[Lemmas 21, 22]{AtkinLehner}, every simultaneous eigenform of a given level $N$ is equivalent to a normalised newform of some level $M\mid N$.

We define the twist of a form $f$ by a Dirichlet character $\psi$ by the following action on its Fourier expansion:
\begin{equation}
f_\psi(z)=\sum_{n\ge 0} \psi(n) a_n  e^{2\pi i nz}.
\end{equation}
We immediately get the following interaction between twists and hecke operators.
\begin{lemma}
\label{hecketwistinteraction}
Let $f$ be a Hecke-eigenform of weight $k$, level $N$ and character $\chi$. Let $\psi$ be a character of level $M$. If $(M,n)=1$ then $T_n^{\chi \psi^2} (f_\psi) = \psi(n)(T_n^{\chi}(f))_{\psi}$.
\end{lemma}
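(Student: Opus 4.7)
The plan is to verify the identity by comparing $\ell$-th Fourier coefficients on both sides. I would first derive the standard closed form for the Fourier coefficients of $T_n^\chi(g)$ in terms of those of $g$: substituting the Fourier expansion of $g$ into the definition of $T_n^\chi$ and carrying out the inner sum $\sum_{b \bmod d}$ (which picks out the divisibility $d \mid m$ from the resulting exponential) yields
\[
[T_n^\chi(g)]_\ell = \sum_{d \mid \gcd(n, \ell)} \chi(d)\, d^{k-1}\, a_{n\ell/d^2}(g).
\]
This is a routine but essential step that reduces the whole lemma to a purely arithmetic identity about characters, and is valid formally for any function with a Fourier expansion of the stated form, so in particular for $f_\psi$ with the character $\chi\psi^2$.

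Next, I would apply this formula to both sides. Since $f_\psi$ has $m$-th Fourier coefficient $\psi(m) a_m$, the left-hand side becomes
\[
[T_n^{\chi\psi^2}(f_\psi)]_\ell = \sum_{d \mid \gcd(n, \ell)} \chi(d)\, \psi(d)^2\, d^{k-1}\, \psi(n\ell/d^2)\, a_{n\ell/d^2},
\]
while the twist on the right-hand side multiplies the $\ell$-th coefficient of $T_n^\chi(f)$ by $\psi(\ell)$, giving
\[
[\psi(n)(T_n^\chi(f))_\psi]_\ell = \psi(n)\,\psi(\ell) \sum_{d \mid \gcd(n, \ell)} \chi(d)\, d^{k-1}\, a_{n\ell/d^2}.
\]

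The remaining step is to match the two sums term by term via multiplicativity of $\psi$: for each $d$ in the index set, $\psi(d)^2\,\psi(n\ell/d^2) = \psi\bigl(d^{2}\cdot n\ell/d^{2}\bigr) = \psi(n\ell) = \psi(n)\psi(\ell)$. Here the hypothesis $(M,n)=1$ enters essentially, guaranteeing $\gcd(d, M) = 1$ for every $d \mid n$, so $\psi(d)$ never vanishes and $\psi(n)$ is itself a nonzero unit value, making the factorisation clean rather than a product of spurious zeros. Once this is observed the identity follows immediately. I do not anticipate any genuine obstacle: the content of the lemma is essentially a bookkeeping exercise with the character factors coming from the twist, and the coprimality assumption is precisely the hypothesis that makes the bookkeeping match without any boundary cases.
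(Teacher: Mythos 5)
Your proposal is correct and matches the paper's approach: the paper's proof is simply the remark that the relation ``is checked by expanding out the definitions of the operators involved,'' which is precisely the coefficient computation you carry out, with the character bookkeeping $\psi(d)^2\psi(n\ell/d^2)=\psi(n)\psi(\ell)$ closing the argument. The only minor caveat is that this last identity already follows from complete multiplicativity of $\psi$ (extended by zero), so $(M,n)=1$ is needed for the statement to be nontrivial rather than for the term-by-term matching itself; this does not affect the validity of your proof.
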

This relation, first stated in \cite{AtkinLi}, is checked by expanding out the definitions of the operators involved. We also verify that $f_\psi$ is a cusp form with character $\chi \psi^2$. This means that the twist of any Hecke-eigenform is equivalent to a newform basis element. We can use twists to pass between Hecke-eigenforms of different levels and characters. 

We define a twist-minimal form as a normalised newform which is not equivalent to any form arising as the twist of a newform from a lower level. Twist-minimal forms of weight $k$, level $N$ and character $\chi$ again span a Hilbert space, stable under Hecke operators, which we denote $\skmin(N,\chi)$. As in the newform decomposition, the immediate utility here is from further restriction to the scope of study.

\subsection{Results}
It will be shown in Theorem \ref{StructuralTheorem} that basis elements for subspaces of cusp forms (given as Fourier expansions) can be computed from the trace of Hecke operators acting on spaces of twist-minimal forms with twist-minimal character. Our first result is an explicit formula for this trace.
\begin{theorem}
\label{MainTheorem}
Fix a weight $k \ge 2$, a level $N \in \mathbb{N}$ and a twist-minimal character $\chi$ of level $N$. Let $\chi$ factor over the primes as $\chi=\prod_{p\mid N}\chi_p$, and denote the conductor $\cond(\chi)$. Let $\text{Tr}T_n^\chi|\skmin(N,\chi)$ denote the trace of the $n$-th Hecke operator acting on the space of twist-minimal forms of level $N$, character $\chi$ and weight $k$. If $\gcd((N/\cond(\chi))^2,n^2,N)$ is not square-free, or if $\chi(-1)\not = (-1)^k$, then $\text{Tr}T_n^\chi|\skmin(N,\chi)=0$. Otherwise, we have:
\begin{equation}
\text{Tr}T_n^\chi|\skmin(N,\chi)=C_1-C_2-C_3+C_4,
\end{equation}
where the terms are defined as follows:
\begin{equation}
\begin{split}
C_1&=\frac{n^{\frac{k}{2}-1}(k-1)\chi_{\cond}(\sqrt{n})}{12}\prod_{p\mid N}\begin{cases}
p^e+p^{e-1}& \text{if }s=e\\
\frac{\phi (\lceil p^{e-2}\rceil)(p-1)}{1+\underset{2\mid e,p>2}{\delta}}(1+\underset{e>1}{\delta}p+\underset{e=2}{\delta}(2s-2))&\text{if }s<e\\
\end{cases}\\
C_2&=\sum_{t^2<4n} \frac{\rho^{k-1}-\overline{\rho}^{k-1}}{\rho-\overline{\rho}}\frac{h(d)}{w(d)}\prod_{\substack{p\mid \ell \\ p\nmid N}}S_p(1,\mathbbm{1},t,n)\prod_{p\mid N}S_p^{\min}(p^e,\chi_p,t,n)\\
C_3&=\sum_{\substack{d\mid n \\ d \le \sqrt{n}}}' d^{k-1}\prod_{p\mid N}\begin{cases}
\frac{\sqrt{2}^e (\chi_p(d)+\chi_p(n/d))}{8}(1-\underset{\gamma=\frac{e}{2}-1}{\delta}2) & \text{if }p=2, 2\mid e, \gamma \ge \frac{e}{2}-1 \ge s, 2\nmid n, e>2\\
\chi_p(d)+\chi_p(n/d) & \text{if }s=e\\
0 & \text{ otherwise}\\
\end{cases}\\
C_4&=\underset{k=2,\chi=\mathbbm{1}}{\delta} \mu(N) \prod_{\substack{p\mid n \\ p\nmid N}}\sigma(p^{\nu_p(n)})\\
\end{split}
\end{equation}
In all terms, $e=\nu_p(N)$ and $s=\nu_p(\cond(\chi))$ for a given prime $p$, and $\delta$ is the characteristic function of the condition underneath it.

We give details of each term in this formula. In $C_1$, $\chi_{\cond}$ is the primitive character which induces $\chi$, and $\chi_{\cond}(\sqrt{n})$ is 0 if $n$ is not square. In $C_3$ we define $\gamma=\nu_p(n/d-d)$ and the dash on the sum indicates that we have an additional factor of $\frac{1}{2}$ if $d=\sqrt{n}$.

In $C_2$, $\rho$ is a root of the polynomial $x^2-tx+n$. We define $d$ as the fundamental discriminant of $t^2-4n$, and $\ell \in \mathbb{N}$ such that $t^2-4n=d\ell^2$. The functions $h$ and $w$ are the class number and roots of unity of a quadratic extension with that fundamental discriminant. For the $S_p^{\min}$ terms, set $\gamma = \nu_p(t^2-4n)$, and let $\left(\frac{\cdot}{p}\right)$ denote the Kronecker symbol, then if $p>2$ and $(p,n)=1$ we have:
\begin{equation}
S_p^{\min}(p^e,\chi,t,n)=\begin{cases}\underset{\substack{e=1 \\ \text{ or } \left(\frac{n}{p}\right)=1}}{\delta} \left(1-\left(\frac{d}{p}\right)\right)\frac{p^{e-3}}{(2,e)}\chi(t/2)\\ \cdot \left(\underset{e>2}{\delta}+p\left(\underset{e=2}{\delta}(1-2s)+\underset{\substack{2\mid e \\ \gamma=e-2}}{\delta}-\underset{\gamma \ge e-1}{\delta}p\right)\right) & \text{if }s<e,\gamma \ge e-2\\
\chi(t/2)\left(2p^{\nu_p(\ell)}+\left(1-\left(\frac{d}{p}\right)\right)\frac{2p^{\nu_p(\ell)}-p^e-p^{e-1}}{p-1}\right) & \text{if }s=e,\gamma \ge 2e-1 \\
p^{\nu_p(\ell)}(\chi(\frac{t+u}{2})+\chi(\frac{t-u}{2})) & \text{if }s=e,\gamma<2e-1, \left(\frac{d}{p}\right)=1\\ 
0 & \text{else}
\end{cases}
\end{equation}
If $p=2$, $s<e$, and $(p,n)=1$ we have:
\begin{equation}
S_2^{\min}(2^e,\chi,t,n)=\left(1-\left(\frac{d}{2}\right)\right)\lceil 2^{e-3}\rceil \begin{cases}
-3\chi\left(\frac{t}{2}\right) & \text{if }\gamma > e, e \ge 3\\
\chi\left(\frac{t}{2}\right)((-1)^e+2) & \text{if }\gamma=e,s=\lfloor \frac{e}{2}\rfloor, e \ge 4 \\
\chi\left(\frac{t}{2}\right)(1-2(-1)^d) & \text{if }\gamma=e-1,2\nmid e, s=\lfloor \frac{e}{2} \rfloor, e \ge 4\\
2(-1)^{d}-1 & \text{if }\gamma \in \{e,e-1\}, s<\lfloor \frac{e}{2} \rfloor, e \ge 3\\
\underset{\substack{e=2\\ \gamma=0}}{\delta}\frac{3}{2}-1 & \text{if }e \in \{1,2\}\\
0 & \text{ otherwise}
\end{cases}
\end{equation}
If $p=2$, $e=s$ and $(p,n)=1$ we have:
\begin{equation}
S_2^{\min}(2^e,\chi,t,n)=\begin{cases}
(1-\underset{\gamma=2e}{\delta}2)\chi(t/2) \\
\cdot\left((2^{\lfloor \frac{\gamma}{2}\rfloor+1}-3\cdot2^{e-1})(1-\left(\frac{d}{2}\right))+\underset{2\nmid d}{\delta}2^{\nu_2(\ell)+1}\right)& \text{if }\gamma \ge 2e\\ \\
2^{\nu_2(\ell)}(\chi\left(\frac{t+u}{2}\right)+\chi\left(\frac{t-u}{2}\right)) & \text{if }\gamma<2e-1,\left(\frac{d}{2}\right)=1\\ \\
0 & \text{otherwise}
\end{cases}
\end{equation}
and if $(p,n)=p$ we have:
\begin{equation}
S_p^{\min}(p^e,\chi,t,n)=\begin{cases}
\left(\frac{d}{p}\right)-1 & \text{if }\gamma>s=0\\
\chi\left(\frac{t-u}{2}\right)+\chi\left(\frac{t+u}{2}\right) & \text{if }s=e, \gamma=0 \\
0 & \text{ otherwise}
\end{cases}
\end{equation}
In all cases, $u\equiv \ell\sqrt{d}$ is defined mod $p^e$ if $p>2$ and $p^{e+2}$ if $p=2$. Finally, we have:
\begin{equation}
S_p(1,\mathbbm{1},t,n)=p^{\nu_p(\ell)}+\left(1-\left(\frac{d}{p}\right)\right)\frac{p^{\nu_p(\ell)}-1}{p-1}
\end{equation}
\end{theorem}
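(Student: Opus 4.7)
The plan is to derive the formula by starting from the Eichler--Selberg trace formula for the full cusp form space $\skfull(N,\chi)$ --- as given in the references \cite{Hijikata}, \cite{Oesterle}, and in the generalised form of Cohen--Str\"omberg --- and then to perform two successive sieves. The four terms of the Eichler--Selberg formula (the identity/dimension term, the elliptic sum over $t^2<4n$, the hyperbolic sum over $d\mid n$, and the Eisenstein correction that appears only when $k=2$ and $\chi=\mathbbm{1}$) correspond respectively to $C_1,C_2,C_3,C_4$ in the statement, so the task is really to show that sieving down to $\skmin(N,\chi)$ preserves this four-term shape with the prescribed local factors.

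The first sieve passes from $\skfull(N,\chi)$ to $\sknew(N,\chi)$ by inclusion--exclusion over the oldform subspaces. Because a level-$M$ cusp form of character $\chi$ embeds into level $N$ via $f(z)\mapsto f(dz)$ for every $d\mid N/M$, standard M\"obius inversion yields
\begin{equation*}
\operatorname{Tr}T_n^\chi\mid\sknew(N,\chi)=\sum_{\cond(\chi)\mid M\mid N}\beta(N/M,n)\operatorname{Tr}T_n^\chi\mid\skfull(M,\chi),
\end{equation*}
where the multiplicities $\beta$ factor prime by prime since both level and conductor do. The second sieve then uses the result from \cite{AtkinLi} (together with Lemma \ref{hecketwistinteraction}, which controls how Hecke eigenvalues transform under twist) that every newform is equivalent to the twist of a unique twist-minimal form of some level dividing $N$ by some Dirichlet character of appropriate conductor. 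This gives a second multiplicativity-over-primes identity expressing $\operatorname{Tr}T_n^\chi\mid\sknew(N,\chi)$ as a sum of $\operatorname{Tr}T_n^{\chi'}\mid\skmin(M,\chi')$ terms, which I would invert again via M\"obius inversion over the local twisting characters.

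Because both sieves factor over primes dividing $N$, the combined inversion turns each prime-$p$ piece of the elliptic local factor from the full formula into the local factor $S_p^{\min}(p^e,\chi_p,t,n)$, and similarly for the prime factors appearing in $C_1$ and $C_3$. The vanishing conditions --- namely $\chi(-1)\neq(-1)^k$ (which forces the full space itself to be zero) and the non-square-freeness of $\gcd((N/\cond(\chi))^2,n^2,N)$ (which causes the sieved sum to telescope to zero) --- should drop out of this combined inversion. The term $C_4$ requires separate handling since the Eisenstein correction is only present at $k=2$, $\chi=\mathbbm{1}$; I would verify that under the sieve the Eisenstein contribution becomes $\mu(N)\prod_{p\mid n,\,p\nmid N}\sigma(p^{\nu_p(n)})$, which is routine once the Hecke-eigenvalue normalisation on the constant function is tracked through both sieves.

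The main obstacle is the explicit case-by-case evaluation of the local factors $S_p^{\min}(p^e,\chi_p,t,n)$, particularly at $p=2$, where the structure of $(\mathbb{Z}/2^e\mathbb{Z})^*$ forces a much richer taxonomy of twist-minimal characters (six cases in Definition \ref{tmcdef} collapse to a long subcase analysis depending on the parities and relative sizes of $e$, $s=\ord_p(\cond(\chi))$, and $\gamma=\ord_p(t^2-4n)$). I expect that organising the sieve cleanly will require a local Gauss-sum computation of how the elliptic weight $\rho^{k-1}-\overline\rho^{k-1}$ and the class number $h(d)/w(d)$ interact with the twisting characters at $p$; the oddness of $n$ modulo $p$ and the quadratic character $\left(\frac{d}{p}\right)$ enter precisely to detect whether $x^2-tx+n$ splits, ramifies, or stays inert mod $p^e$, which governs which twist classes survive. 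Once these local computations are carried out prime by prime, matching them against the case analysis of $S_p^{\min}$ will complete the proof.
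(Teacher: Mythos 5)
Your outline follows the same route as the paper (full space $\to$ newforms $\to$ twist-minimal via twisting, multiplicativity over primes, then local case analysis), but two of your structural steps contain genuine gaps. First, your second sieve rests on the claim that every newform is the twist of a \emph{unique} twist-minimal form; this is false, and the failure of uniqueness is precisely where the nontrivial combinatorics lives. Distinct twist pairs $\langle M,\psi_1\rangle$ and $\langle M,\psi_2\rangle$ with $\psi_2=\overline{\chi\psi_1}$ give rise to the same newforms (Lemma \ref{lemmapairequivalence}), so the decomposition (\ref{notdisjointunion}) is not disjoint, and the correct trace relation carries the weights $2^{-k(N/M,\chi,\psi)}$ counting the sizes of twist-equivalence classes (Lemma \ref{lemmaequivalencesize}); these become the factors $\tfrac12$ in the decomposition formula (\ref{decompositionformula}) and they change the final local factors. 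A plain ``M\"obius inversion over the local twisting characters'' without this multiplicity count returns wrong coefficients. Second, your old-to-new sieve $\mathrm{Tr}\,T_n|\sknew(N,\chi)=\sum_M\beta(N/M,n)\,\mathrm{Tr}\,T_n|\skfull(M,\chi)$ is not valid when a prime divides both $n$ and $N$: the oldform copies $f(dz)$ are not permuted diagonally by $T_p$ for $p\mid N$, and the correct identity (Lemma \ref{newformequation}, i.e.\ \cite[Theorem 13.5.7]{CohenStromberg}) involves traces of $T_{n/d^2}$ at lower levels with weights $\chi(d)d^{k-1}$, summed over $d\in\mathcal{P}(N,\chi,n)$. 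This cannot be absorbed into coefficients multiplying $\mathrm{Tr}\,T_n$ alone; it is the reason for the paper's separate treatment of $p\mid(N,n)$ and the final factorising-over-$n$ step, and it shapes $C_3$ and $C_4$. Relatedly, the vanishing when $\gcd((N/\cond(\chi))^2,n^2,N)$ is not square-free does not come from a telescoping of the sieve, but from the Atkin--Li fact that newforms have $a_p=0$ when $p\mid(n,N/\cond(\chi))$ and $p^2\mid N$.

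Beyond these two points, the actual content of the theorem --- the explicit $S_p^{\min}$ case analysis, especially at $p=2$ --- is only flagged as an obstacle in your plan. Be aware that before any local sieving can begin, the elliptic term $A_2$ must first be made multiplicative over primes: the paper does this with the Booker--Lee identity $h(d\ell^2/f^2)/w(d\ell^2/f^2)=\frac{\ell h(d)}{f w(d)}\prod_{p\mid \ell/f}\frac{p-\left(\frac{d}{p}\right)}{p}$, explicit evaluation of the solution-count sums $J(p^e,p^\beta,\chi_p,t,n)$, and the character-sum formula of Lemma \ref{sumcharformula}; your proposed ``local Gauss-sum computation'' needs to include an equivalent of this class-number factorisation, or the sum over $f^2\mid t^2-4n$ never factors over primes and the sieve has nothing to act on.
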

\begin{definition}
Fix a weight $k$, level $N$ and character $\chi$. The \emph{trace form} of $\skmin(N,\chi)$, denoted $\mathcal{T}_{N,\chi}$, is given by the following series on $z \in \mathbb{H}$:
\begin{equation}
\mathcal{T}_{N,\chi}(z)=\sum_{n \ge 1} \text{Tr}T_n^\chi|\skmin(N,\chi) e^{2 \pi i n z}.
\end{equation}
\end{definition}
We can extract basis elements for $\skmin$ from its trace form by the following lemma.
\begin{lemma}
Let $\mathcal{T}_{N,\chi}$ be the trace form for the twist-minimal space $\skmin(N,\chi)$. Then all $T_n\mathcal{T}_{N,\chi}$ are in $\skmin(N,\chi)$. Let $M$ be a matrix with each colum $M_n$ given by the coefficients of $T_n \mathcal{T}_{N,\chi}$. If $M$ has at least the number of columns and rows equal to the Sturm bound of the space then $M$ is a basis for the space. i.e. the rank of $M$ equals the dimension of the space.
\end{lemma}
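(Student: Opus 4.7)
The plan is to decompose $\mathcal{T}_{N,\chi}$ in the Hecke eigenbasis of $\skmin(N,\chi)$ and then translate the rank claim on $M$ into a linear independence statement about cusp forms, which is controlled by the Sturm bound.

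First I would observe that the twist-minimal forms $f_1,\dots,f_d$ spanning $\skmin(N,\chi)$ are normalized newforms, hence simultaneous Hecke eigenforms with $a_1(f_i)=1$ and $T_n f_i = a_n(f_i) f_i$; Atkin--Lehner multiplicity one makes them linearly independent, so they form a basis of $\skmin(N,\chi)$. Computing the trace in this eigenbasis yields $\text{Tr}\,T_n^\chi|\skmin(N,\chi) = \sum_i a_n(f_i)$, and interchanging summation gives
\[
\mathcal{T}_{N,\chi}(z) \;=\; \sum_i \sum_{n \ge 1} a_n(f_i) e^{2 \pi i n z} \;=\; \sum_i f_i(z),
\]
so $\mathcal{T}_{N,\chi} \in \skmin(N,\chi)$ and, since this space is Hecke-stable, $T_n \mathcal{T}_{N,\chi} = \sum_i a_n(f_i) f_i \in \skmin(N,\chi)$ for every $n$, proving the first assertion.

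For the rank claim, let $B$ denote the Sturm bound and assume $M$ has $L \ge B$ rows and $L \ge B$ columns. The Sturm bound states that the Fourier-truncation map $\tau:\skfull(N,\chi) \to \mathbb{C}^L$, sending a form to its first $L$ coefficients, is injective, so its restriction to $\skmin(N,\chi)$ is too. The $n$-th column of $M$ is $\tau(T_n \mathcal{T}_{N,\chi})$, so $\text{rank}(M) = \dim\,\text{span}\{T_n \mathcal{T}_{N,\chi} : 1 \le n \le L\}$ in $\skmin(N,\chi)$. Writing $\sum_n c_n T_n \mathcal{T}_{N,\chi} = \sum_i \bigl(\sum_n c_n a_n(f_i)\bigr) f_i$ and using that the $f_i$ form a basis, this dimension equals the rank of the $L \times d$ matrix $A$ whose $i$-th column is $\tau(f_i)$ (via transpose-invariance of rank). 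The columns of $A$ are linearly independent by Sturm applied to the independent $f_i$, so $\text{rank}(A) = d$ and hence $\text{rank}(M) = d = \dim \skmin(N,\chi)$. Any $d$ linearly independent columns of $M$ then correspond, via injectivity of $\tau$, to a basis of $\skmin(N,\chi)$.

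The main subtlety is applying the Sturm bound cleanly in two separate places: once to identify the column span of $M$ in $\mathbb{C}^L$ with a span of cusp forms in $\skmin(N,\chi)$, and once to argue that the Fourier-coefficient matrix of the $f_i$ has full column rank. A tempting shortcut is to write $M = A A^{\top}$ and invoke $\text{rank}(AA^{\top}) = \text{rank}(A)$, but this identity fails over $\mathbb{C}$ in general (consider $A = \begin{pmatrix} 1 & i \end{pmatrix}$) and holds here only because $A$ has full column rank, which is itself the content of Sturm. Once this bookkeeping is handled, the remainder is routine linear algebra.
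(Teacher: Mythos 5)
Your proposal is correct, and its first half coincides with the paper's argument: both expand the trace form in a normalised eigenform basis, getting $T_n\mathcal{T}_{N,\chi}=\sum_i a_{i,n}f_i$, which settles membership in $\skmin(N,\chi)$. Where you genuinely diverge is the rank claim. The paper's proof is a one-liner: writing $B$ for the truncated coefficient matrix of the $f_i$, it notes $M=BB^{\top}$ and asserts that $M$ and $B$ have equal rank. As you observe, $\operatorname{rank}(AA^{\top})=\operatorname{rank}(A)$ can fail over $\mathbb{C}$, and the identity is valid here precisely because $B$ has full column rank $d$: the $f_i$ are linearly independent and truncation at the Sturm bound is injective on $\skfull(N,\chi)$ (then $\ker(BB^{\top})=\ker(B^{\top})$, so $\operatorname{rank}(BB^{\top})=d$). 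Your route sidesteps the product identity entirely, using injectivity of truncation to identify $\operatorname{rank}(M)$ with $\dim\operatorname{span}\{T_n\mathcal{T}_{N,\chi}:n\le L\}$ and then equating that with the column rank of the eigenform coefficient matrix. Both arguments reach the same conclusion; yours makes explicit the Sturm-bound hypothesis on both the rows and the columns that the paper's rank assertion relies on silently, so it is, if anything, the more complete write-up, and your aside about the failure of the $AA^{\top}$ shortcut over $\mathbb{C}$ is exactly the point a careful reader of the paper's proof should check.
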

\begin{proof}
Let $f_i$ be a normalised Hecke eigenform basis for $\skmin(N,\chi)$, where the $i$-th element has $n$-th Fourier coefficient $a_{i,n}$. Then we have:
\begin{equation}
T_n\mathcal{T}_{N,\chi}=\sum_i a_{i,n} f_i \in \skmin(N,\chi).
\end{equation}
Further, if we let $B$ denote the matrix with columns given by $f_i$ coefficients, then $M=BB^{\top}$ and so the ranks of $B$ and $M$ are equal.
\end{proof}
\begin{definition}
\label{definitiontwistpair}
Fix a level $N$ and a character $\chi$. A `twist pair', denoted $\langle M, \psi \rangle$, is a tuple of level $M$ and character $\psi$ satisfying at least one of the following for every prime $p$:
\begin{itemize}
\item $\nu_p(M)=\nu_p(N)$ and $\psi=\mathbbm{1}$
\item $2 \nmid p$, $2\mid \nu_p(N)$, $\nu_p(\mathfrak{f}(\chi))<\nu_p(N)$, $\psi_p \not = \chi_p$ and $\nu_p(M)=\nu_p(\cond(\psi))=\frac{\nu_p(N)}{2}$
\item $2\nmid p$, $\nu_p(N)=2$, $\chi_p=\mathbbm{1}$, $\nu_p(M)=0$ and $\psi_p=\left(\frac{\cdot}{p}\right)$
\end{itemize}
\end{definition}
Two twist pairs $\langle M_1, \psi_1 \rangle$ and $\langle M_2, \psi_2 \rangle$ are considered equivalent if $M_1 = M_2$ and $\psi_2 = \overline{\chi \psi_1}$. Basis elements for spaces of cusp forms can subsequently be generated as  using the following result:
\begin{theorem}
\label{StructuralTheorem}
Fix a level $N$ and weight $k$, and let $\chi$ be any (not necessarily twist-minimal) character of level $N$. The space $\skfull(N,\chi)$ has a basis of cusp forms given by:
\begin{equation}
\left(T_m^{\chi \psi^2} \mathcal{T}_{M,\chi \psi^2}\right)_{\overline{\psi}}(dz)
\end{equation}
where $m \in \mathbb{N}$, $\chi \psi^2$ is twist-minimal level $M$, and setting $M'=\max(M,\cond(\psi)\cond(\chi\psi))$ we have $M'\mid N$ and $d\mid \frac{N}{M'}$. Fixing $d=1$ in the above gives a basis for $\sknew(N,\chi)$, and fixing $\psi=\mathbbm{1}$ gives a basis for $\skmin(N,\chi)$, provided $\chi$ is twist-minimal.

If $\chi$ is twist-minimal, then basis elements for any of these spaces are given by letting $M$ and $\psi$ run over all twist pairs for level $N$ and character $\chi$. If $\chi$ is not twist-minimal, then we can generate basis elements from bijection with a twist-minimal space. Further, for any of these spaces, there exists a subset of $m \in \mathbb{N}$ with $m$ at most the Sturm bound of the space such that the resultant basis matrix has zero nullity.\end{theorem}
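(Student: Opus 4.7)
The plan is to combine the Atkin--Lehner decomposition of $\skfull(N,\chi)$ into old and new pieces with a decomposition of each new piece into twists of twist-minimal spaces, and then generate every twist-minimal piece via the preceding trace-form lemma. First I would write
\[
\skfull(N,\chi)=\bigoplus_{\substack{M'\mid N \\ \cond(\chi)\mid M'}}\;\bigoplus_{d\mid N/M'}\sknew(M',\chi)[d],
\]
where $[d]$ denotes the injection $f(z)\mapsto f(dz)$. This reduces the problem to exhibiting a basis for each $\sknew(M',\chi)$ and observing that the $[d]$-shifts remain independent across $d$.

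Next I would invoke the structural result recalled in Section \ref{Results} following \cite{AtkinLi}: every normalised newform is equivalent to the twist of a twist-minimal form by some Dirichlet character. Concretely, for each normalised newform $f\in\sknew(M',\chi)$ there is a divisor $M\mid M'$, a character $\psi$, and a twist-minimal form $g\in\skmin(M,\chi\psi^2)$ with $f=g_{\overline{\psi}}$. I would verify prime-by-prime that the level of such a twisted newform is $M'=\max(M,\cond(\psi)\cond(\chi\psi))$, using the local analysis of Atkin--Li: at primes where $\psi_p=\mathbbm{1}$ the $p$-part of $M'$ matches that of $M$, while at primes where $\psi_p$ is nontrivial the $p$-part of $M'$ is controlled by the conductor exponents of $\psi_p$ and $\chi_p\psi_p$. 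This local analysis is precisely what is enforced by the bullet points of Definition \ref{definitiontwistpair}, so the admissible pairs $(M,\psi)$ are exactly the twist pairs. The equivalence $\langle M,\psi\rangle\sim\langle M,\overline{\chi\psi}\rangle$ accounts for the fact that $g$ and its complex-conjugate twist-minimal representative produce the same newform up to scaling.

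With these pieces in place I would generate each twist-minimal subspace using the unnumbered lemma directly preceding the theorem: the Hecke shifts $T_m^{\chi\psi^2}\mathcal{T}_{M,\chi\psi^2}$ span $\skmin(M,\chi\psi^2)$ once $m$ is allowed up to the Sturm bound. Applying $(\cdot)_{\overline{\psi}}$ then lands in $\sknew(M',\chi)$ via Lemma \ref{hecketwistinteraction} combined with the level formula above, and applying $[d]$ lands in $\skfull(N,\chi)$; letting $(M,\psi)$ run over all twist pairs and $d$ over divisors of $N/M'$ then yields the asserted spanning set. Specialising $d=1$ restricts to $\sknew(N,\chi)$, and further specialising $\psi=\mathbbm{1}$ restricts to $\skmin(N,\chi)$ when $\chi$ is itself twist-minimal. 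The finite-$m$ claim is the Sturm-bound step: any cusp form in the relevant space is determined by its initial Fourier coefficients up to the Sturm bound, so the basis matrix stabilises in rank once that many columns are included.

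The final case in which $\chi$ is not twist-minimal is handled by choosing a Dirichlet character $\eta$ so that $\chi\eta^{-2}$ is twist-minimal, producing a Hecke-equivariant isomorphism $\skfull(N,\chi)\simeq\skfull(N',\chi\eta^{-2})$ via $f\mapsto f_{\eta}$ for a suitable level $N'$; the basis produced on the twist-minimal side is transported back by $(\cdot)_{\overline{\eta}}$. The main obstacle I anticipate is the careful prime-local bookkeeping required to (i) pin down the exact level $M'=\max(M,\cond(\psi)\cond(\chi\psi))$ of a twisted newform and (ii) match the case distinctions of Definition \ref{definitiontwistpair} with those of Definition \ref{tmcdef}, so as to ensure that the twist-pair equivalence is the only source of redundancy and that no newform is omitted.
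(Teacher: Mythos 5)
Your proposal follows essentially the same route as the paper: the Atkin--Lehner old/new decomposition, the decomposition of each newform space into twists of twist-minimal spaces indexed by the twist pairs of Definition \ref{definitiontwistpair} (with the same Atkin--Li prime-local level bookkeeping, carried out in the paper's Lemmas \ref{twistresultlemma}--\ref{lemmapairequivalence} and equation (\ref{notdisjointunion})), the trace-form lemma plus Sturm bound to span each twist-minimal piece, and a twist bijection to handle non-twist-minimal $\chi$. The only slight deviation is that the paper invokes the bijection (via \cite[Lemma 1.6]{BookerLeeStrombergsson}) between twist-minimal spaces $\skmin(N,\chi)$ and $\skmin(N,\chi\psi^2)$ at the same level rather than an isomorphism of full cusp form spaces, but this does not change the argument in substance.
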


Finally, we extend Lemma \ref{hecketwistinteraction} to a result giving the Fourier expansion of a newform whenever an appropriate twist-minimal form is known.

\begin{theorem}\label{thm:twistedcoeffs}
Let $f\in\skmin(N,\chi)$ be a twist-minimal form with Fourier coefficients $a_n$. Let $\psi$ be a primitive Dirichlet character, and
let $\psi'$ be the primitive character that induces $\chi\psi$.
Then $f_\psi$ is equivalent to a newform in $\sknew(M,\chi\psi^2)$ with Fourier coefficients $b_n$, where $M=\text{LCM}(N,\cond(\psi)\cond(\psi'))$ and:
\begin{equation}\label{eq:twistedcoeffs}
b_p=
\begin{cases}
a_p\psi(p)
&\text{if }\psi_p \not = \overline{\chi_p}
\text{ or }p\nmid\cond(\psi),\\
\overline{a_p}\psi'(p)&\text{otherwise}
\end{cases}
\end{equation}
for all primes $p$.
\end{theorem}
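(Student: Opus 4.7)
\emph{Proof plan.} Primitive Dirichlet characters factor as $\psi=\prod_p\psi_p$ with each $\psi_p$ primitive of prime-power conductor, and twisting by $\psi$ can be decomposed into successive twists by the $\psi_p$, whose conductors are coprime. I would first reduce by induction on the number of prime divisors of $\cond(\psi)$ to the case that $\cond(\psi)$ is a prime power $p^r$; the primitive character $\psi'$ inducing $\chi\psi$ also factors prime-by-prime, so it suffices to establish the formula one prime at a time.

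For $\psi$ of prime-power conductor $p^r$, I would next identify $f_\psi$ with a newform $g\in\sknew(M,\chi\psi^2)$. A direct computation with the Fourier expansion shows that $f_\psi$ transforms correctly under $\Gamma_0(M)$, yielding the upper bound on the level. The matching lower bound is where twist-minimality of $f$ enters: if $g$ were equivalent to a newform at a strictly smaller level, twisting back by $\overline{\psi}$ via Lemma \ref{hecketwistinteraction} would produce a form equivalent to $f$ at a strictly smaller level than $N$, contradicting that $f$ is twist-minimal.

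The computation of $b_p$ then splits into three subcases. If $p\nmid\cond(\psi)$, Lemma \ref{hecketwistinteraction} with $n=p$ gives $T_p^{\chi\psi^2}(f_\psi)=\psi(p)a_p f_\psi$, so the equivalent newform $g$ satisfies $b_p=\psi(p)a_p$. If $p\mid\cond(\psi)$ and $\psi_p\neq\overline{\chi_p}$, then $g$ has $p^2\mid M$ with ramified character at $p$; the standard newform criterion then forces $b_p=0$, still matching $\psi(p)a_p=0$ because $\psi(p)=0$.

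The main obstacle is the remaining case $p\mid\cond(\psi)$ with $\psi_p=\overline{\chi_p}$. Here $\chi_p\psi_p=1$, so the twist locally cancels the character, and the primitive $\psi'$ inducing $\chi\psi$ satisfies $\nu_p(\cond(\psi'))<\nu_p(\cond(\psi))$, which explains why $\psi'$ replaces $\psi$ in the formula. I would compute the $p$-th Fourier coefficient of $f_\psi$ directly as a Gauss sum paired against the coefficients of $f$, then apply an Atkin--Lehner-type involution at $p$ to rewrite this Gauss sum in terms of $\overline{a_p}$. This is the classical counterpart of the local functional-equation argument implicit in \cite{AtkinLi}, and the resulting identity is precisely $b_p=\overline{a_p}\psi'(p)$.
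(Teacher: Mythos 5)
Your overall mechanism is the same as the paper's (handle the primes with $\psi_p=\overline{\chi_p}$ by an Atkin--Lehner/Gauss-sum argument, i.e.\ essentially \cite[(1.1)]{AtkinLi}, handle the remaining primes via Lemma \ref{hecketwistinteraction}, and use twist-minimality to control the level), but two steps of your reduction have genuine gaps. First, the induction on the number of primes dividing $\cond(\psi)$ does not preserve its own hypothesis: after twisting at a prime $p_1$ with $\psi_{p_1}\neq\overline{\chi_{p_1}}$ and $p_1\mid\cond(\psi)$, the resulting newform is by construction a twist of a form of lower level, hence no longer twist-minimal, so the inductive hypothesis cannot be invoked at the next prime. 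This is exactly why the paper does not induct prime by prime but splits $\psi=\psi_Q\cdot\psi_{N/Q}$, with $Q$ the product of the primes where $\psi_p=\overline{\chi_p}$: the $\psi_Q$-twist keeps the level equal to $N$ and the intermediate form twist-minimal, and only then is the remaining (level-raising) twist applied in a single step via Lemma \ref{twistresultlemma}. Your induction needs either this ordering or a strengthened local statement; as written the second inductive step is unjustified.

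Second, your lower bound on the level is insufficient. Twist-minimality of $f$ only rules out the twisted newform having level \emph{below} $N$; it does not force the level up to $M=\mathrm{LCM}(N,\cond(\psi)\cond(\psi'))$ when the latter exceeds $N$. Concretely, if the newform equivalent to $f_\psi$ had $p$-level strictly between $\nu_p(N)$ and $\nu_p(\cond(\psi)\cond(\psi'))$, twisting back by $\overline{\psi}$ only exhibits $f$ as (equivalent to) a twist of a newform of level at least $N$, so no contradiction with twist-minimality arises; moreover the generic level bound for the back-twist is $\mathrm{LCM}(M',\cond(\psi)^2,\cond(\psi)\cond(\psi'))$, which need not be smaller than $N$ even when $M'<M$. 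Pinning down the exact conductor of the twist in the level-raising case requires the Atkin--Li results on twists of $p$-primitive forms, i.e.\ the content of Lemma \ref{twistresultlemma}, which the paper cites and which your back-twist argument does not reprove. The coefficient computations themselves are essentially fine: the vanishing $b_p=0$ in your middle case needs the (easily supplied) observation that $\nu_p(\cond(\chi\psi^2))<\nu_p(M)$ and $\nu_p(M)\ge 2$, which holds because $\chi_p\psi_p\neq\mathbbm{1}$ and $p\mid\cond(\psi)$, and your Gauss-sum/involution sketch in the last case is a reproof of \cite[(1.1)]{AtkinLi}, which the paper simply invokes.
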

\begin{proof}
First, let $Q$ be the product of primes $p$ such that $\psi_p = \overline{\chi_p}$, and let $\psi_Q$ be the product of the respective $\psi_p$. Twisting by $\psi_Q$ is equivalent to applying the Atkin and Lehner $W_Q$ operator. The twist $f_{\psi_Q}$ is equivalent to a newform of level $N$ with coefficients as in (\ref{eq:twistedcoeffs}) by \cite[(1.1)]{AtkinLi}.

Denote this newform $g$, and note that $g$ is itself twist-minimal. This means we can apply Lemma \ref{twistresultlemma} to $g_{\psi_{\frac{N}{Q}}}$, whereupon the result follows.
\end{proof}

These results give us an efficient way of computing basis elements of cusp form spaces. The most computationally expensive term in Theorem \ref{MainTheorem} is $C_2$ in which the multiplicative function is a simple case statement, and the relevant class numbers can be pre-calculated and stored in a table. This formula is thus simpler to compute than the respective formula for full cusp form spaces, given in (\ref{traceformulaequation}).

Further, if we let $d_{\text{new}}$ denote the dimension of a newform space $\mathcal{S}^{\text{new}}$, then the complexity of generating a full-rank matrix of leading coefficients of basis elements from the trace formula is $O(d_{\text{new}}^3)$. On the other hand, letting $d_{\min}$ denote the dimension of a particular twist-minimal space $\mathcal{S}^{\min}$ which twists into $\mathcal{S}^{\text{new}}$ then the complexity of computing the same number of coefficients for just these basis elements is $O\left((d_{\min}d_{\text{new}})^{\frac{3}{2}}\right)$. As a result, as well as the individual terms being easier to compute, the overall complexity for computing spaces of newforms (or, in turn, full cusp form spaces) is reduced by recovering these spaces from twist-minimal spaces.

The restriction that the weight be at least 2 means that weight 1 forms are missed in this approach.\footnote{Weight 1 forms are also missed in the modular symbols approach outlined in \cite{Cremona}} This is a meaningful issue as, since the proof of Serre's modularity conjecture in \cite{KhareWintenberger}, these forms are known to categorise all odd, two-dimensional, irreducible Galois representations over finite fields. The Fourier coefficients of these forms give the traces of Frobenius elements in the associated representations.

The computation presented in \cite{Schaeffer}, however, involves first computing weight 2 basis elements, before manipulating these in a manner to find weight 1 forms. In future work, we will adapt this approach, utilising the benefits of the twist-minimal weight 2 computation presented here to perform more efficient computations of weight 1 forms.
\section{Structural theory}
\label{StructuralSection}
We study the structural theory of cusp forms, which leads to the proof of Theorem \ref{StructuralTheorem}. In \cite[Theorem 5]{AtkinLehner} a decomposition of cusp form spaces is given in terms of newforms and lifts of newforms. To decompose these newform spaces in terms of twist-minimal spaces we draw on previous results on twists of cusp forms.
\begin{lemma}
\label{twistresultlemma}
Fix some $f \in \skmin(N,\chi)$ with $\chi$ twist-minimal, and let $\psi$ be any Dirichlet character, then we deduce the following about $f_\psi$:
\begin{enumerate}
\item $f_\psi$ is a newform if and only if $\psi_p \not = \overline{\psi_p}$ for all $p\mid \cond(\psi)$
\item If $f_\psi$ is a newform, then it has level $M$ and character $\chi\psi^2$ where, for any prime $p$, we have $\nu_p(M)=\max(\nu_p(N),\nu_p(\cond(\psi))+\nu_p(\cond(\chi\psi)))$
\end{enumerate}
\end{lemma}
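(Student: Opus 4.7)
The proof factorises the problem prime-by-prime and then invokes the Atkin--Li local twist theory. Since Dirichlet characters, conductors, and level exponents are all multiplicative, I decompose the twist by $\psi$ as a composition of twists by the local components $\psi_p$ and track the local data at each step. I would also reduce to the case of primitive $\psi$ at the outset, since if $\psi$ is imprimitive we can pass to the primitive character inducing it and handle the resulting difference between $f_\psi$ and $f_{\psi'}$ via a direct coefficient comparison. The global conclusions then follow by assembling the local results.

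For a fixed prime $p$, write $e=\nu_p(N)$, $s=\nu_p(\cond(\chi))$, and $t=\nu_p(\cond(\psi))$. The formula $\nu_p(M)=\max(e,\,t+\nu_p(\cond(\chi_p\psi_p)))$ in part (2) is the standard local conductor result of Atkin--Li, and I would verify it holds under each case permitted by the twist-minimality of $\chi_p$ as set out in Definition \ref{tmcdef}. The case $s=e$ (i.e.\ $\chi_p$ primitive) is classical: when $t\ge 1$ the conductor jumps to $t+\nu_p(\cond(\chi_p\psi_p))$, and when $t=0$ nothing happens. When $s<e$, twist-minimality restricts $\chi_p$ to a small family of exceptional types, and the formula is checked by direct local computation, paralleling the case analysis already needed for the local terms $S_p^{\min}$ in Theorem \ref{MainTheorem}.

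For part (1), the form $f_\psi$ fails to be a newform exactly when a local Atkin--Lehner correction intervenes. Specifically, if $\psi_p=\overline{\chi_p}$ for some $p\mid\cond(\psi)$, then the twist by $\psi_p$ coincides locally with the Atkin--Lehner operator $W_{p^e}$ (this is the identification used in the proof of Theorem \ref{thm:twistedcoeffs}). The operator $W_{p^e}$ sends a newform to another newform but alters the local character, so the globally twisted form $f_\psi$ lives in a different newform space than its naive level and character data predict, preventing $f_\psi$ from itself being a newform at the expected level. Conversely, when $\psi_p\neq\overline{\chi_p}$ for every $p\mid\cond(\psi)$, no such correction arises; the local conductors grow in the expected way at every prime, so $f_\psi$ is genuinely a newform of level $M$ and character $\chi\psi^2$.

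The main obstacle is the careful local case analysis at $p=2$, where Definition \ref{tmcdef} admits several non-primitive twist-minimal types (the cases $s=\lfloor e/2\rfloor$, the odd-$e$ exceptions, and the trivial character cases) and the 2-adic conductor formula itself has subtleties that do not appear for $p>2$. Fortunately the computations run parallel to those developed for $S_2^{\min}$ in Theorem \ref{MainTheorem}, and each instance reduces to verifying a handful of explicit local identities relating $\cond(\chi_2\psi_2)$, $\cond(\psi_2)$, and the level exponent.
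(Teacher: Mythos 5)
Your proposal takes essentially the same route as the paper: the paper's entire proof of this lemma is the remark that the claims follow by applying the Atkin--Li results on twists of newforms to the special case of twist-minimal forms with twist-minimal character, which is precisely the prime-by-prime Atkin--Li case analysis you outline (and you correctly read the condition in part (1) as $\psi_p\neq\overline{\chi_p}$, as the paper's own use of the lemma in Theorem \ref{thm:twistedcoeffs} confirms). The published argument supplies no more detail than your sketch, so your approach matches it in substance.
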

These come from applying the results in \cite{AtkinLi} to the specific case of twist-minimal forms with twist-minimal character. The multiplicativity present in both the expression for the level of $f_\psi$ and the definition of twist-minimal characters will allow us to decompose newform spaces multiplicatively.

Fix a level $N$. For any $\chi$, there exists a character $\psi$ so that $\chi\psi^2$ is twist-minimal, and twisting by $\psi$ gives a bijection between $\skmin(N,\chi)$ and $\skmin(N,\chi \psi^2)$. This is shown in \cite[Lemma 1.6]{BookerLeeStrombergsson}, with a construction of the required $\psi$. As a result, we need only consider twist-minimal spaces with twist-minimal character, and need study only the decomposition of newform spaces with twist-minimal character.
\subsection{Explicit decomposition of a specified newform space}
In this section, we will identify which twist-minimal spaces are required to obtain all basis elements of any given newform space with twist-minimal character.

Because of Lemma \ref{twistresultlemma}, and the definition of twist-minimal characters, we need only study spaces where $N$ (and therefore $\cond(\chi)$) is a power of some prime.
\begin{lemma}
\label{twistfrom1}
Let $p$ be an odd prime and $\chi$ be a twist-minimal character level $p^f$, then there exists a non-trivial twist from $\skmin(p^e,\mathbbm{1})$ to $\sknew(p^f,\chi)$ if and only if $\chi=\mathbbm{1}$, $f=2$ and $e \in \{0,1\}$
\end{lemma}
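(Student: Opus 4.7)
The plan is to apply Lemma~\ref{twistresultlemma} to determine the level and character of a twist, then pin down the twist-minimal $\chi$ for which the level works out.

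First take a non-trivial primitive character $\psi$ of conductor $p^a$ and $f\in\skmin(p^e,\mathbbm{1})$. Since the source character is trivial, $\cond(\chi\psi)=\cond(\psi)=p^a$, so by Lemma~\ref{twistresultlemma}(2) the twist $f_\psi$ has character $\psi^2$ and level $p^{\max(e,2a)}$. Matching the target $(p^f,\chi)$ thus forces $\chi=\psi^2$ and $f=\max(e,2a)$.

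Next I case-split on the three possibilities for $\chi$ twist-minimal at level $p^f$ from Definition~\ref{tmcdef} with $p$ odd. If $\chi$ is primitive of conductor $p^f$, I argue that $\psi$ must itself have conductor $p^f$: the subgroup $1+p^{a-1}\mathbb{Z}/p^a\mathbb{Z}$ of $(\mathbb{Z}/p^a\mathbb{Z})^*$ has odd order $p$, so squaring is injective there, and $\psi^2$ is primitive of conductor $p^a$ whenever $\psi$ is. This forces $a=f$, giving level $p^{\max(e,2f)}\geq p^{2f}>p^f$, a contradiction. If $\chi$ has order $2^{\nu_2(p-1)}$, I rule it out on group-theoretic grounds: the cyclic character group of $(\mathbb{Z}/p^a\mathbb{Z})^*$ has 2-part exactly $2^{\nu_2(p-1)}$, so no character in it squares to one of order $2^{\nu_2(p-1)}$ (such a $\psi$ would need order $2^{\nu_2(p-1)+1}$, exceeding the 2-part of the group).

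This leaves $\chi=\mathbbm{1}$, so $\psi$ must be quadratic, namely $\psi=\left(\frac{\cdot}{p}\right)$ of conductor $p$, giving $a=1$ and $f=\max(e,2)$. Finally, I distinguish $e\le 1$ from $e\ge 2$: for $e\in\{0,1\}$ the level strictly increases to $p^2$ and the resulting newform lies in $\sknew(p^2,\mathbbm{1})\setminus\skmin(p^2,\mathbbm{1})$, giving a genuine non-trivial twist; for $e\ge 2$, twisting by the quadratic character is an involution on $\skmin(p^e,\mathbbm{1})$, so $f_\psi$ stays inside $\skmin$ and contributes nothing new. The main obstacle will be the primitive-case conductor computation and a clean characterization of which twists produce newforms outside $\skmin$, so that the $e\ge 2$ self-twist can be properly excluded.
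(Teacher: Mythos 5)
Your proposal is correct and takes essentially the same route as the paper's (much terser) proof: apply Lemma \ref{twistresultlemma} to get character $\psi^2$ and level $p^{\max(e,2\nu_p(\cond(\psi)))}$, then use the classification of twist-minimal characters for odd $p$ to force $\psi$ quadratic, hence $f=2$ and $e\in\{0,1\}$, with the $e\ge 2$ case excluded just as the paper does by restricting to twists from strictly lower level ($e<f$). One small caveat: your claim that $\psi^2$ is primitive of conductor $p^a$ whenever $\psi$ is requires $a\ge 2$ (the quadratic character mod $p$ is a counterexample at $a=1$), but that case is dispatched immediately since then $\cond(\psi^2)\le p<p^f$, so the contradiction in the primitive case survives.
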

\begin{proof}
Twisting $\skmin(p^e,\mathbbm{1})$ by a character $\psi$ gives forms with level $\cond(\chi)^2$ and character $\chi=\psi^2$. The only possibility $\psi$ to be non-trivial while $\psi^2$ is twist-minimal is if Ord$(\psi)=2$, and so $f=2$, whereupon $e<f$ implies $e \in \{0,1\}$.
\end{proof}
\begin{lemma}
Let $p$ be an odd prime, and $\chi_2$ a twist-minimal character with $\cond(\chi_2)=p^f$, then there does not exist any twist from $\skmin(p^e,\chi_1)$ to $\sknew(p^f,\chi_2)$ with $e<f$ and $\chi_1$ twist-minimal.
\end{lemma}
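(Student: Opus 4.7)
The plan is to argue by contradiction. Suppose $\psi$ is a Dirichlet character such that twisting some $f \in \skmin(p^e, \chi_1)$ yields a newform $f_\psi \in \sknew(p^f, \chi_2)$ with $e < f$. First I would apply Lemma~\ref{twistresultlemma}(2) at each prime $q \neq p$: since $\nu_q(p^e) = \nu_q(\cond(\chi_1)) = 0$, the level formula forces $2\nu_q(\cond(\psi_q)) = \nu_q(p^f) = 0$, so $\psi$ must have $p$-power conductor. The same lemma then imposes the character relation $\chi_1 \psi^2 = \chi_2$.

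The key numerical input is the conductor jump $\cond(\chi_1) \le p^e < p^f = \cond(\chi_2)$. Since the conductor of a product of Dirichlet characters divides the lcm of their conductors, and every character in sight has $p$-power conductor, this forces $\cond(\psi^2) \ge p^f$. I would then invoke the structural fact that for $p$ odd the group $(\mathbb{Z}/p^t\mathbb{Z})^*$ is cyclic of order $p^{t-1}(p-1)$, and squaring acts bijectively on its Sylow $p$-subgroup; consequently $\cond(\psi^2) = \cond(\psi)$ whenever $\psi^2 \neq \mathbbm{1}$, while otherwise $\cond(\psi^2) = 1$.

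This splits into two cases. If $\psi^2 = \mathbbm{1}$ then the character relation collapses to $\chi_1 = \chi_2$, immediately contradicting the conductor jump. If instead $\psi^2 \neq \mathbbm{1}$, then $\cond(\psi) \ge p^f$, and the level formula of Lemma~\ref{twistresultlemma}(2) reads $f = \max\bigl(e, \nu_p(\cond(\psi)) + \nu_p(\cond(\chi_1\psi))\bigr)$. Since $\nu_p(\cond(\psi)) \ge f > e$, this forces $\nu_p(\cond(\chi_1 \psi)) = 0$, i.e.\ $\psi = \overline{\chi_1}$, so $\cond(\psi) = \cond(\chi_1) \le p^e < p^f$, contradicting $\cond(\psi) \ge p^f$. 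The main delicate point is the conductor identity $\cond(\psi^2) = \cond(\psi)$, which rests on the cyclic structure of $(\mathbb{Z}/p^t\mathbb{Z})^*$ for odd $p$ and is precisely where the hypothesis on $p$ enters; the $p = 2$ case would fail this step and demands the separate treatment carried out later in the paper.
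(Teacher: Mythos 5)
Your proof is correct, and it takes a genuinely different route from the paper's. The paper argues by splitting on whether $\cond(\chi_1\psi)<\cond(\psi)$ or $\cond(\chi_1\psi)\ge\cond(\psi)$, in each case playing the conductor of $\chi_1\psi^2$ off against the level $p^f=\cond(\chi_2)$ via Lemma \ref{twistresultlemma} part 2. You instead pivot on the observation that, for odd $p$, squaring a character of $p$-power conductor either trivialises it or preserves its conductor; combined with $\cond(\chi_1\psi^2)=p^f$ and $\cond(\chi_1)\le p^e<p^f$ this forces $\cond(\psi)\ge p^f$, after which the level formula $f=\max\bigl(e,\nu_p(\cond(\psi))+\nu_p(\cond(\chi_1\psi))\bigr)$ forces $\cond(\chi_1\psi)=1$, i.e.\ $\psi=\overline{\chi_1}$, giving an immediate contradiction. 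Your version has the advantage that the hypothesis $p>2$ enters at a single, clearly flagged point (the conductor-of-square identity, which fails modulo $2^c$ for $c\ge 3$ since squaring then drops the conductor, which is precisely why the paper's $p=2$ analogue requires a longer separate argument), and you also make explicit the preliminary reduction to $\psi$ of $p$-power conductor, which the paper leaves implicit. The paper's two-case comparison, by contrast, uses only generic conductor inequalities for products of characters and is the same template it reuses in the $p=2$ lemma, at the cost of a terser and, as written, somewhat elliptical pair of contradictions.
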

\begin{proof}
Assume for contradiction that $\psi$ is a character providing such a twist from $\skmin(p^e,\chi_1)$ to $\sknew(p^f,\chi_2)$. If $\cond(\chi_1 \psi) < \cond(\psi)$ then $\cond(\chi_1)=\cond(\psi)$ and $\cond(\chi_1\psi^2)=\cond(\psi)$. This means $e=s$ which is a contradiction.

On the other hand if we have $\cond(\chi_1 \psi) \ge \cond(\psi)$ then $\cond(\chi_1)\cond(\chi_1 \psi)>\cond(\chi \psi^2)$. This means, by Lemma \ref{twistresultlemma} part 2, that the level $p^f>\cond(\chi_1 \psi^2)$, which contradicts $p^f=\cond(\chi_2)$.
\end{proof}
On the other hand, when the conductor of the character of the higher level space is less than the level (written as $\cond(\chi_2)<p^f$) then we have an abundance of twists into this space.
\begin{lemma}
\label{alltwists}
Let $p$ be an odd prime, $f$ an even integer, and $\chi_2$ a twist-minimal character level $p^f$ with $\cond(\chi_2)<p^f$. Any $\psi$ with $\cond(\psi)=p^{\frac{f}{2}}$, excepting $\psi=\chi_2$, twists from $\skmin(p^{\frac{f}{2}},\chi_1)$ into $\sknew(p^f,\chi_2)$ where $\chi_1=\chi_2\overline{\psi}^2$. This, along with lemma \ref{twistfrom1}, characterises all twists from lower levels.
\end{lemma}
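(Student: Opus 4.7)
The plan is to derive both halves of the lemma from Lemma~\ref{twistresultlemma} and the structure of twist-minimal characters. Because $\chi_2$ is twist-minimal of level $p^f$ with $\cond(\chi_2)<p^f$ and $p$ is odd, Definition~\ref{tmcdef} leaves only two possibilities: $\chi_2=\mathbbm{1}$ or $\mathrm{Ord}(\chi_2)=2^{\nu_2(p-1)}$. In either case the order of $\chi_2$ divides $p-1$, so $\chi_2$ factors through $(\mathbb{Z}/p\mathbb{Z})^*$ and $\cond(\chi_2)\le p$. I will use this bound pervasively.

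For the existence half, set $\chi_1=\chi_2\overline{\psi}^2$. First, $\cond(\chi_1)\le p^{f/2}$ because both $\chi_2$ and $\overline{\psi}^2$ have conductor at most $p^{f/2}$, so $\chi_1$ legitimately lives at level $p^{f/2}$. Take any $h\in\skmin(p^{f/2},\chi_1)$. The newform criterion in Lemma~\ref{twistresultlemma}(1) reduces to $\psi\neq\overline{\chi_1}=\overline{\chi_2}\psi^2$, i.e.\ $\psi\neq\chi_2$, which is precisely the excluded case. The level formula in Lemma~\ref{twistresultlemma}(2) then gives $\nu_p(M)=\tfrac{f}{2}+\nu_p(\cond(\chi_2\overline{\psi}))$, and I show the second term equals $\tfrac{f}{2}$: for $f\ge 4$ this is immediate from $\cond(\chi_2)\le p<p^{f/2}=\cond(\psi)$, while for $f=2$ the hypothesis $\psi\neq\chi_2$ is exactly what prevents $\chi_2\overline{\psi}$ from becoming trivial. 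The character of $h_\psi$ is $\chi_1\psi^2=\chi_2$ by construction, finishing the first half.

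For the characterisation, let $g\in\skmin(p^e,\chi_3)$ with $\chi_3$ twist-minimal and $e<f$, and suppose $g_\psi\in\sknew(p^f,\chi_2)$. Lemma~\ref{twistresultlemma} forces $\chi_3=\chi_2\overline{\psi}^2$ and $a+\nu_p(\cond(\chi_2\overline{\psi}))=f$, where $a=\nu_p(\cond(\psi))$. Splitting on $\cond(\psi)$ versus $\cond(\chi_2)$: if $\cond(\psi)>\cond(\chi_2)$ then the second term equals $a$, yielding $a=f/2$; otherwise $\cond(\psi)\le\cond(\chi_2)\le p$, so $a\le 1$ and the second term is also $\le 1$, forcing $f\le 2$. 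The latter lands at the $f=2$ boundary, handled either by Lemma~\ref{twistfrom1} (when $\chi_3=\mathbbm{1}$) or by the present lemma at its smallest case. In the main branch $a=f/2$ with $f\ge 4$, the bound $\cond(\chi_2)\le p$ again forces $\chi_3=\chi_2\overline{\psi}^2$ to have conductor exactly $p^{f/2}$; since a twist-minimal character at level $p^e$ with non-primitive conductor must have order dividing $p-1$ by Definition~\ref{tmcdef}, while $\chi_3$ of conductor at least $p^2$ has order divisible by $p$, we conclude $e=f/2$, matching the statement.

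The main technical obstacle is the fine casework around $f=2$ and $\psi^2=\mathbbm{1}$, where several conductor equalities simultaneously degenerate. The simplifying observation is that for odd $p$, squaring a character of conductor $p^a$ preserves the conductor unless the character has order exactly $2$ (equivalently, $a=1$ and $\psi$ is the Legendre symbol); isolating this single exceptional squaring behaviour cleanly separates the cases covered by Lemma~\ref{twistfrom1} from those covered here.
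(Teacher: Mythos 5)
Your proof is correct and takes essentially the same route as the paper, whose entire proof is the single sentence that the result follows from applying the level calculations of Lemma \ref{twistresultlemma} to all possible twists; you simply carry out those calculations, driven by the observation that a non-primitive twist-minimal character at an odd prime has order dividing $p-1$ and hence conductor at most $p$. The only implicit points are that $\chi_1=\chi_2\overline{\psi}^2$ is automatically twist-minimal (primitive mod $p^{f/2}$ when $f\ge 4$, and any character mod $p$ when $f=2$), so Lemma \ref{twistresultlemma} is applicable, and that you read its part (1) as the condition $\psi_p\neq\overline{\chi_p}$, which is clearly the intended reading of the paper's statement.
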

\begin{proof}
This comes from applying Lemma \ref{twistresultlemma} level calculations on all possible twists.
\end{proof}
Having covered all twists when $p$ is odd, we now study $p=2$.
\begin{lemma}
Let $\chi_1$ and $\chi_2$ be twist-minimal characters for $2^e$ and $2^f$ respectively with $f>e$. There does not exist any twist from $\skmin(2^e,\chi_1)$ to $\sknew(2^f,\chi_2)$.
\end{lemma}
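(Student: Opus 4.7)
The plan is to argue by contradiction. Suppose a Dirichlet character $\psi$ provides a twist from some $f\in\skmin(2^e,\chi_1)$ to a newform equivalent to one in $\sknew(2^f,\chi_2)$ with $f>e$. Write $s_1=\nu_2(\cond(\chi_1))$, $s_2=\nu_2(\cond(\chi_2))$, $t=\nu_2(\cond(\psi))$ and $u=\nu_2(\cond(\chi_1\psi))$. Lemma \ref{twistresultlemma}(2) gives $\chi_2=\chi_1\psi^2$ and $2^f=2^{\max(e,t+u)}$, so the assumption $f>e$ forces the key equation $f=t+u$.

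A key technical fact I would invoke is that for two characters of distinct $2$-adic conductor one has $\cond(\alpha\beta)=\max(\cond(\alpha),\cond(\beta))$, since the identity $\alpha=(\alpha\beta)\cdot\overline\beta$ prevents any drop in conductor. Applied to $\chi_1$ and $\psi$, this splits the analysis into three ranges: if $t>s_1$ then $u=t$ and $f=2t$; if $t<s_1$ then $u=s_1$ and $f=t+s_1\le 2s_1\le 2e$; and if $t=s_1$ then $u\le s_1$ and $f\le 2s_1$. In the latter two, combined with the explicit list $s_1\in\{0,2,\lfloor e/2\rfloor,e\}$ from Definition \ref{tmcdef}, either $f\le e$ directly or one is forced into the primitive subcase $s_1=e$.

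I would then run through the four twist-minimal templates for $\chi_1$ at $2^e$ (namely $\chi_1=\mathbbm{1}$ with $e$ odd or $e=2$; $s_1=2$ with $e>3$ odd; $s_1=\lfloor e/2\rfloor$; or $s_1=e$), and for each determine the allowed conductor $s_2=\nu_2(\cond(\chi_1\psi^2))$, again using the same conductor-of-product identity. The essential piece of $2$-adic structure is that on $(\mathbb{Z}/2^k\mathbb{Z})^*\cong\{\pm 1\}\times\langle 5\rangle$ (for $k\ge 3$), squaring $\psi$ annihilates the $\{\pm 1\}$ factor and halves its order on $\langle 5\rangle$, so that $\cond(\psi^2)\le 2^{\max(0,t-1)}$ with equality under generic conditions. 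The twist-minimality of $\chi_2$ at $2^f$ restricts $s_2$ to one of $\{0,2,\lfloor f/2\rfloor,f\}$ under the corresponding parity/size conditions, and I would verify case by case that the value of $s_2$ forced by $\chi_1\psi^2$ never lands in an admissible template, yielding the contradiction.

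The main obstacle is the primitive subcase $s_1=e$, where $t$ may take any value from $0$ through arbitrarily large integers, each producing a different relation among $s_1,t,u,s_2,f$. Careful handling is needed because squaring at $p=2$ can collapse the conductor by an unpredictable amount depending on whether $\psi$ is generic on the $\langle 5\rangle$ component. I expect the outcome in every subcase to be that either $s_2$ is even but not equal to $f$ or $\lfloor f/2\rfloor$, or the relation $\chi_1=\chi_2\overline{\psi}^2$ retroactively forces $s_1$ into a value incompatible with the twist-minimal template at $2^e$. The bulk of the proof is therefore disciplined $2$-adic conductor bookkeeping against Definition \ref{tmcdef} rather than a single conceptual step.
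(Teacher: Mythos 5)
Your overall strategy is the same as the paper's: argue by contradiction, use Lemma \ref{twistresultlemma}(2) for the level of the twisted form, and split according to the comparison of $\cond(\psi)$ with $\cond(\chi_1)$, closing each case by $2$-adic conductor bookkeeping against Definition \ref{tmcdef}. Your two unequal-conductor cases do go through as sketched: when $t>s_1$ one gets $s_2\le\max(s_1,t-1)<t=f/2$, which excludes every admissible template for even $f\ge4$; when $t<s_1$ the templates $s_1\in\{0,2,\lfloor e/2\rfloor\}$ force $f\le e$ or $\psi=\mathbbm{1}$, and $s_1=e$ gives $s_2=e$ with $e<f<2e$, which fits no template.

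There is, however, a concrete gap in the equal-conductor case $t=s_1$, exactly the subcase you flag as the main obstacle. Your only stated control there is $u\le s_1$, and that is not enough: if $u=s_1=e$ could occur, then $f=2e$ and nothing in your argument rules out $s_2=e=\lfloor f/2\rfloor$, which \emph{is} an admissible twist-minimal template, while the relation $\chi_1=\chi_2\overline{\psi}^2$ is perfectly consistent with $\cond(\chi_1)=2^e$; so neither branch of your anticipated dichotomy yields a contradiction, and your "key technical fact" (max of \emph{distinct} conductors) together with the bound on $\cond(\psi^2)$ does not apply. What is needed — and what the paper uses implicitly in its third case when it writes $2^f=\cond(\psi)\cond(\chi_1\psi)<\cond(\psi)^2$ — is the strict drop $\cond(\chi_1\psi)<\cond(\chi_1)$ whenever $\cond(\psi)=\cond(\chi_1)=2^e$: for $e\ge3$ both characters restrict to faithful characters of the cyclic $2$-group $\langle 5\rangle$, i.e.\ odd powers of a fixed generator of its dual, so their product is an even power and hence imprimitive; for $e=2$ one has $\psi=\chi_1$, so $\chi_1\psi=\mathbbm{1}$ and $f=e$, contradicting $f>e$ (this is the conductor-$4$ caveat the paper treats separately). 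Once this fact is added, $u<e$ forces $f=e+u<2e$ and $s_2=e$, and the same template check as in your $t<s_1$, $s_1=e$ case finishes the proof; without it, the proof as proposed does not close.
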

\begin{proof}
Assume for contradiction that $\psi$ is such a character. We break the proof down into three cases based on $\cond(\psi)$ and $\cond(\chi_1)$. If $\cond(\psi)>\cond(\chi_1)$ then by level calculation we have $p^f=\cond(\psi)^2$. But the only twist-minimal $\chi_2$ with $2\mid f$ is when $\cond(\chi_2)=p^{\frac{f}{2}}=\cond(\psi)$. However, we also have $\cond(\chi_2)=\cond(\chi_1 \psi^2) \le \text{max}(\cond(\chi_1),\cond(\psi^2)) < \cond(\psi)$, which is a contradiction.

Next, suppose $\cond(\psi)<\cond(\chi_1)$ so that $\cond(\chi_2)=\cond(\chi_1)$ and $p^f = \cond(\psi)\cond(\chi_1)<p^{2e}$. As $\psi \not = \mathbbm{1}$, we must have $\cond(\chi_1)>4$. If $\cond(\chi_1)=p^e$ then there exist no twist-minimal characters for $e<f<2e$. If $\cond(\chi_1)=p^{\lfloor e/2 \rfloor}$ then the only possibility for $f$ is $e+1$, which would mean $\cond(\psi)=2$ which is impossible.

Finally, suppose $\cond(\psi)=\cond(\chi_1)$, so that $\cond(\chi_2)=\cond(\chi_1 \psi^2) = \cond(\chi_1)$ and $p^f=\cond(\psi)\cond(\chi_1 \psi)<\cond(\psi)^2=\cond(\chi_1)^2$. This is the same as the last case except for the fact that we can no longer rule out $\cond(\chi_1)=4$ when $\cond(\psi)=4$. However, when this is the case we get $\cond(\psi \chi_1)=1$ so that $p^f=p^e$, which contradicts $f>e$.
\end{proof}
We can summarise the above lemmas to give a decomposition of newform spaces with twist-minimal character into twists from twist-minimal spaces. Let a twist pair be defined as in Definition \ref{definitiontwistpair}, then we have:
\begin{equation}
\label{notdisjointunion}
\sknew(N,\chi)= \bigcup_{\langle M, \psi \rangle }\skmin(M,\chi \psi^2)_{\overline{\psi}},
\end{equation}
with the union being taken over all valid twist pairs for level $N$ and character $\chi$. Here, the twist notation means that all forms in the space have been twisted by the subscript character.

This is sufficient for Theorem \ref{StructuralTheorem} but in order to use this decomposition in computations we need it to be disjoint.
\subsection{Removing twist-equivalent forms}
\begin{definition}
Let $\skmin(N_1,\chi_2\psi_1^2)$ and $\skmin(N_2,\chi_2\psi_2^2)$ be two spaces in the decomposition given in (\ref{notdisjointunion}). We say that $f \in \skmin(N_1,\chi_2\psi_1^2)$ and $g \in \skmin(N_2,\chi_2\psi_2^2)$ are \emph{twist-equivalent} if $f_{\overline{\psi_1}} = g_{\overline{\psi_2}}$.
\end{definition}
Twist-equivalent forms are thus twist-minimal forms which give rise to the same newform. We see immediately that two forms are twist-equivalent only if $N_1=N_2$, and the following lemma lets us consider twist-equivalence as a condition on twist pairs.
\begin{lemma}
\label{lemmapairequivalence}
Let $\skmin(p^e,\chi_2\psi_1^2)$ and $\skmin(p^e, \chi_2\psi_2^2)$ be two spaces in the decomposition given in (\ref{notdisjointunion}). For any $f \in \skmin(p^e,\chi_2\psi_1^2)$ there exists some twist-equivalent $g \in \skmin(p^e,\chi_2\psi_2^2)$ if and only if we have either $\psi_2=\overline{\chi_2\psi_1}$ or $\psi_2=\psi_1$.
\end{lemma}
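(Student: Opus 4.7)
The plan is to translate twist-equivalence into a single twist operation and apply Lemma \ref{twistresultlemma}. Setting $\phi = \overline{\psi_1}\psi_2$, the condition $f_{\overline{\psi_1}} = g_{\overline{\psi_2}}$ is equivalent (on Fourier coefficients coprime to $p$) to $g = f_\phi$, and since two newforms that agree on cofinitely many Hecke eigenvalues are equal, the problem reduces to: when does $f_\phi$ lie in $\skmin(p^e, \chi_2\psi_2^2)$?

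For the backward direction I would treat the two cases separately. When $\psi_1 = \psi_2$, $\phi$ is trivial and $g = f$ works. When $\psi_2 = \overline{\chi_2\psi_1}$, one has $\phi = \overline{\chi_2\psi_1^2}$, the conjugate of the nebentypus of $f$. Following the argument used in the proof of Theorem \ref{thm:twistedcoeffs}, this twist coincides with the Atkin--Lehner operator $W_{p^e}$ applied to $f$, producing a newform of the same level with conjugated character $\overline{\chi_2\psi_1^2} = \chi_2\psi_2^2$. Twist-minimality is preserved by $W_{p^e}$: the conductor-structure conditions of Definition \ref{tmcdef} are invariant under conjugation, and the action of $W_{p^e}$ on Hecke eigenvalues is compatible with twisting by the full conjugate character, so $f_\phi \in \skmin(p^e, \chi_2\psi_2^2)$.

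For the forward direction, I would suppose $\phi$ is nontrivial and $\phi \neq \overline{\chi_2\psi_1^2}$ and show $f_\phi$ cannot lie at level $p^e$. By Lemma \ref{twistresultlemma}(2), the $p$-adic valuation of the level of $f_\phi$ equals $\max\bigl(e,\ \nu_p(\cond(\phi)) + \nu_p(\cond(\chi_2\psi_1\psi_2))\bigr)$, so to stay at level $p^e$ one needs $\nu_p(\cond(\phi)) + \nu_p(\cond(\chi_2\psi_1\psi_2)) \le e$. In the generic odd-$p$ second-bullet case of Definition \ref{definitiontwistpair}, $\psi_1$ and $\psi_2$ are primitive mod $p^e$ and $\chi_2\psi_1^2$ is primitive mod $p^e$: if $\cond(\phi) = p^e$ then $\cond(\chi_2\psi_1\psi_2)$ must be $1$, forcing $\psi_2 = \overline{\chi_2\psi_1}$ (excluded); if $\cond(\phi) < p^e$ then $\chi_2\psi_1\psi_2 = (\chi_2\psi_1^2)\phi$ keeps conductor $p^e$, forcing $\phi$ trivial (also excluded). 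The third-bullet case ($e = 0$, $\psi = \left(\frac{\cdot}{p}\right)$) offers only one nontrivial $\psi$, and the first-bullet case forces both $\psi_i = \mathbbm{1}$, so both are automatic.

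The main obstacle will be handling non-primitive twist-minimal characters and the $p = 2$ subcases of Definition \ref{tmcdef}, where $\cond(\chi_2\psi_1^2)$ need not equal $p^e$ and the unit group of $\mathbb{Z}/2^e\mathbb{Z}$ is not cyclic; for these I would run the conductor bookkeeping explicitly, decomposing $(\mathbb{Z}/2^e\mathbb{Z})^*$ into its cyclic and order-$2$ parts, and verifying that the two potential drops of $\cond(\phi)$ and $\cond(\chi_2\psi_1\psi_2)$ beneath $p^e$ cannot co-occur outside the two named exceptions.
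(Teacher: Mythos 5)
Your proposal follows essentially the same route as the paper: reduce twist-equivalence to the single twist by $\overline{\psi_1}\psi_2$ and invoke the level formula of Lemma \ref{twistresultlemma}(2), so that the conductor bookkeeping $\cond(\overline{\psi_1}\psi_2)\cond(\chi_2\psi_1\psi_2)\le p^e$ forces $\psi_2=\psi_1$ or $\psi_2=\overline{\chi_2\psi_1}$, while the converse is obtained by exhibiting $g=f_{\overline{\psi_1}\psi_2}$ (your Atkin--Lehner detour for the case $\psi_2=\overline{\chi_2\psi_1}$ is just an equivalent way of seeing that the level remains $p^e$). The residual worry you flag about imprimitive characters and $p=2$ is dispatched in the paper by Lemma \ref{alltwists} (if $\psi_1\ne\mathbbm{1}$ then $\cond(\chi_2\psi_1^2)=\cond(\chi_2\psi_2^2)=p^e$) together with the fact that Definition \ref{definitiontwistpair} only allows nontrivial $\psi_p$ at odd $p$, so your ``generic'' case is in fact the only nontrivial one.
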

\begin{proof}
We know from Lemma \ref{alltwists} that if $\psi_1 \not = \mathbbm{1}$ then $\cond(\chi_2\psi_1^2)=\cond(\chi_2\psi_2^2)=p^e$. Assume that either $\psi_2=\overline{\chi_2 \psi_1 }$ or $\psi_2=\psi_1$, then setting $g=f_{\overline{\psi_1}\psi_2}$ we get $g_{\overline{\psi_2}}=f_{\overline{\psi_1}}$ and the level of $g$ is $p^e$, hence $g \in \skmin(p^e,\chi_2\psi_2^2)$.

In the other direction, assume that $f_{\overline{\psi_1}}=g_{\overline{\psi_2}}$. Then twisting $f$ by $\overline{\psi_1}\psi_2$ gives the level of $g$ as $\cond(\overline{\psi_1}\psi_2)\cond(\chi_2\psi_1\psi_2)$. This only equals $p^e$ if $\psi_2=\overline{\chi_2\psi_1}$ or $\psi_2=\psi_1$.
\end{proof}
In general for level $N$, we see that two forms are twist-equivalent if and only if this condition holds for all $p\mid N$. This allows us to write down a disjoint decomposition. Let $\langle M,\psi\rangle$ be as before, and let $\sim$ denote the relation by this lemma giving rise to twist-equivalence, then we have the decomposition:
\begin{equation}
\label{disjointdecomposition}
\sknew(N,\chi)= \bigoplus_{\langle M, \psi \rangle / \sim}\skmin(M,\chi \psi^2)_{\overline{\psi}}
\end{equation}
We now quantify the equivalence classes. This is also done multiplicatively, and we can find the number of elements in an equivalence class based on which case we are in for each $p\mid N$ in the definition of twist pairs (Definition \ref{definitiontwistpair}).
\begin{lemma}
\label{lemmaequivalencesize}
Fix a level $N$ and Dirichlet character $\chi$. Let $\langle M,\psi\rangle$ be some twist pair in the decomposition of $\sknew(N,\chi)$. We define a function $k\left( \frac{N}{M}, \chi, \psi\right)$, as the number of primes $p\mid \frac{N}{M}$ such that either $\psi_p \not = \left(\frac{\cdot}{p}\right)$ or $p\mid \mid \cond(\chi)$. Then the size of the equivalence class of $\langle M, \psi \rangle$ in (\ref{disjointdecomposition}) is $2^{k\left(\frac{N}{M},\chi,\psi\right)}$. 
\end{lemma}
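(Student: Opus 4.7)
My plan is to prove the lemma via a multiplicativity argument: both Definition \ref{definitiontwistpair} and the equivalence relation described in Lemma \ref{lemmapairequivalence} are defined prime-by-prime, so the equivalence class of $\langle M,\psi\rangle$ decomposes as a product of local equivalence classes, and its size equals the product of the local sizes. A prime $p\nmid N/M$ falls in case 1 and forces $\psi_p=\mathbbm{1}$, contributing a trivial factor of $1$; only primes $p\mid N/M$ (in case 2 or 3) require work. For each such prime, Lemma \ref{lemmapairequivalence} applied locally shows that the equivalence class at $p$ consists of the valid local twist-pair choices among $\{\psi_p,\overline{\chi_p\psi_p}\}$, so the local size is $1$ or $2$; size $2$ requires $\overline{\chi_p\psi_p}\ne\psi_p$ (equivalently $\chi_p\psi_p^2\ne\mathbbm{1}_p$) together with $\overline{\chi_p\psi_p}$ being itself a valid local choice for the same case.

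The case-3 primes are handled at once: there $\chi_p=\mathbbm{1}$ and $\psi_p=\left(\frac{\cdot}{p}\right)$, whence $\overline{\chi_p\psi_p}=\psi_p$, giving local size $1$, which matches the exclusion of such primes from $k$. For case-2 primes I would split on whether $\psi_p$ equals the Legendre symbol. If $\psi_p=\left(\frac{\cdot}{p}\right)$ (so $\nu_p(N)=2$), then $\psi_p^2=\mathbbm{1}_p$ and $\chi_p\psi_p^2=\chi_p$, which is trivial exactly when $p\nmid\cond(\chi)$; this distinguishes local size $1$ (for $p\nmid\cond(\chi)$, uncounted by $k$) from local size $2$ (for $p\mid\mid\cond(\chi)$, counted). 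If $\psi_p\ne\left(\frac{\cdot}{p}\right)$, then $\psi_p^2$ is nontrivial with conductor $p^{\nu_p(N)/2}$, and the twist-minimality of $\chi\psi^2$ forces $\cond(\chi_p)\le p^{\nu_p(N)/2}$; using the classification of twist-minimal $\chi_p$ (for $p>2$: primitive, trivial, or of order $2^{\nu_2(p-1)}$, with analogous cases for $p=2$) one then verifies that $\overline{\chi_p\psi_p}$ is a valid case-2 entry distinct from $\psi_p$, so the local size is $2$.

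Combining these local factors multiplicatively yields the stated $2^{k(N/M,\chi,\psi)}$. The main obstacle I anticipate is the detailed verification in the last sub-case: one must confirm that whenever $\psi_p\ne\left(\frac{\cdot}{p}\right)$ the candidate $\overline{\chi_p\psi_p}$ really defines a bona fide case-2 twist pair, i.e.\ has conductor exactly $p^{\nu_p(N)/2}$ and differs from $\chi_p$. Both failure modes---$\cond(\overline{\chi_p\psi_p})<p^{\nu_p(N)/2}$ and $\overline{\chi_p\psi_p}=\chi_p$---have to be ruled out from the precise twist-minimality structure of $\chi$, and this case-by-case bookkeeping is the most delicate step of the argument; once completed, the lemma follows immediately from the multiplicative count.
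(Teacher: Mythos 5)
Your proposal follows essentially the same route as the paper: multiplicativity over the primes dividing $N/M$ combined with Lemma \ref{lemmapairequivalence}, with singleton local classes exactly when $\psi_p=\mathbbm{1}$ or when $\chi_p=\mathbbm{1}$ and $\psi_p=\left(\frac{\cdot}{p}\right)$, and local size $2$ otherwise. The verification you flag as the delicate remaining step (that $\overline{\chi_p\psi_p}$ really is a valid, distinct case-2 twist-pair entry) is exactly what the paper's proof asserts in a single line (``the lemma gives us a second, distinct twist pair''), so your outline matches the paper's argument in both structure and level of detail.
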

\begin{proof}
By following Lemma \ref{lemmapairequivalence} we see that if $\psi_p = \mathbbm{1}$ or $\chi_p=\mathbbm{1}$ and $\psi_p=\left(\frac{\cdot}{p}\right)$ then the pair is in a singleton equivalence class. For all other cases, the lemma gives us a second, distinct twist pair. The result then follows from multiplicativity.
\end{proof}
\section{Deriving the trace formula}
\label{traceformula}
In \cite[Theorem 12.4.11]{CohenStromberg} we see the following explicit formula for the trace of a Hecke operator on an arbitrary full cusp form space. The specific details of functions and variables are as in Theorem \ref{MainTheorem}.

Fix a level $N$, character $\chi$ and weight $k \ge 2$. If $\chi(-1) \not = (-1)^k$ then $\text{Tr}T_n^\chi|\skfull(N,\chi)=0$. Otherwise we have:
\begin{equation}
\text{Tr}T_n^\chi|\skfull(N,\chi)=A_1-A_2-A_3+A_4,
\end{equation}
where the individual terms are given by:
\begin{equation}
\label{traceformulaequation}
\begin{split}
A_1& = n^{k/2-1}\chi(\sqrt{n})\frac{k-1}{12}N\prod_{p\mid N}\left(1+\frac{1}{p}\right), \\
A_2& = \sum_{\substack{t \in \mathbb{Z}\\ t^2 < 4n}}\frac{\rho^{k-1}-\overline{\rho}^{k-1}}{\rho-\overline{\rho}} \sum_{f^2\mid (t^2-4n)}\frac{h((t^2-4n)/f^2)}{w((t^2-4n)/f^2)} \prod_{\substack{p\mid N \\ p \nmid \frac{N}{(N,f)}}}\mu(p,N,(f,N),\chi),\\
A_3& = \sum_{\substack{d\mid n \\ d \le \sqrt{n}}}' d^{k-1} \sum_{\substack{c\mid N \\ (c,N/c)\mid (\frac{N}{\cond(\chi)},n/d-d)}} \phi((c,N/c))\chi(x_1),\\
A_4& = \sum_{\substack{t\mid n \\ (n/t,N)=1}}t,\\
\mu(p,N,g,\chi)&=g\cdot \left( 1 + \frac{1}{p}\right) \sum_{\substack{x\text{ mod }N \\ x^2-tx+n \equiv 0 \text{ (mod }Ng)}} \chi(x).\\
\end{split}
\end{equation}
In addition to the details from Theorem \ref{MainTheorem} , the variable $x_1$ is defined by the congruences $x_1 \equiv d$ (mod $c$) and $x_1 \equiv n/d$ (mod $N/c$). These congruences are seen to uniquely fix $x_1$ modulo $\cond(\chi)$, and so the term $\chi(x_1)$ in $A_3$ is well-defined.

From the structural theory, we write the cusp form space trace formula in terms of newform spaces, and subsequently in terms of twist-minimal spaces. We then invert these formulae to get a formula for the twist-minimal space in terms of full cusp form spaces.

The newform trace is written as a sum over the set:
\begin{equation}
\label{mathcalp}
\mathcal{P}(N,\chi,n)=\left\{ x  : \text{for all }p\mid x\text{ we have } p\mid \mid x, p\mid \mid N, \chi_p=\mathbbm{1}\text{ and } p^2\mid n \right\}.
\end{equation}
From now on, the notation $\mathcal{P}$ will always refer to this set.
\begin{lemma}
\label{newformequation} For any $m \in \mathbb{N}$, define $\beta_m$ to be a multiplicative function, defined on powers of primes as:
\begin{equation}
\label{eq:betafunc}
\beta_m(p^a) = \begin{cases}
1 & \text{if }a=0\\
\underset{p\mid m}{\delta}-2 & \text{if }a=1\\
1-\underset{p\mid m}{\delta} & \text{if }a=2\\
0 & \text{if }a \ge 3.\\
\end{cases}
\end{equation}
If gcd$((N/\cond(\chi))^2,n^2,N)$ isn't square-free, then $\text{Tr}T_n^\chi|\sknew(N,\chi)=0$. Otherwise, we have:
\begin{equation}
\text{Tr}T_n^\chi|\sknew(N,\chi) = \sum_{d \in \mathcal{P}(N,\chi,n)}\chi(d) d^{k-1}\sum_{\substack{M\mid N/d\\ \cond(\chi)\mid M}}   \beta_{n/d^2}(N/dM)\text{Tr}T_{n/d^2}^\chi|\skfull(M,\chi)
\end{equation}
\end{lemma}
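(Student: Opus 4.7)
The strategy is to first establish a forward identity expressing $\text{Tr}T_n^\chi|\skfull(M,\chi)$ in terms of the newform traces $\text{Tr}T_{n/d^2}^\chi|\sknew(M',\chi)$ for $M'\mid M$, and then to invert it multiplicatively. The forward direction comes from the classical Atkin--Lehner decomposition
\begin{equation*}
\skfull(N,\chi)=\bigoplus_{\cond(\chi)\mid M\mid N}\bigoplus_{d\mid N/M}\{f(dz):f\in\sknew(M,\chi)\},
\end{equation*}
combined with an explicit description of how each $T_n^\chi$ acts on the oldform span $V_{M,f}^{(N)}=\langle f(dz):d\mid N/M\rangle$ attached to a newform $f\in\sknew(M,\chi)$.

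First I would reduce everything to local data at each prime. For $p\nmid N$ the operator $T_p^\chi$ acts by the eigenvalue $a_p(f)$ on every lift $f(dz)$, so the $p$-local contribution to the trace is $(\nu_p(N/M)+1)a_p(f)$. For $p\mid N$ (where $T_p^\chi$ is the $U_p$ operator) one computes the matrix of $T_p^\chi$ in the basis $\{f(p^jz):0\le j\le \nu_p(N/M)\}$ using the recursion $U_p(f(p^jz))=f(p^{j-1}z)-\chi(p)p^{k-1}f(p^{j+1}z)$ at level $M$ together with its extension when $p\mid M$. This is a standard computation (see, e.g., \cite{AtkinLehner} or \cite{CohenStromberg}) and yields multiplicative local trace factors $\tau_p$ depending only on $\nu_p(N/M)$, $\nu_p(n)$, $\chi_p$, and $a_p(f)$. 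Summing over $f$ and regrouping by intermediate level produces a forward relation of the shape
\begin{equation*}
\text{Tr}T_n^\chi|\skfull(N,\chi)=\sum_{\cond(\chi)\mid M\mid N}\sum_{d\in\mathcal{P}(N,\chi,n)}\chi(d)d^{k-1}\alpha_{n/d^2}(N/Md)\text{Tr}T_{n/d^2}^\chi|\sknew(M,\chi),
\end{equation*}
where $\alpha$ is multiplicative in $N/Md$, with local factors $\sigma_0(p^a)=a+1$ when $p\nmid n/d^2$; the outer sum over $d$ is exactly supported on $\mathcal{P}(N,\chi,n)$ because those are precisely the squarefree scalings for which the off-diagonal entries proportional to $\chi(p)p^{k-1}$ survive (requiring $p\mid\mid N$, $\chi_p=\mathbbm{1}$, and $p^2\mid n$).

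Inverting this relation prime-by-prime yields the claimed formula. Since both sides are multiplicative over the primes dividing $N$, the inversion reduces to a local Dirichlet-convolution inverse at each prime. When $p\nmid n/d^2$ the local factor of $\alpha$ is $\sigma_0(p^a)=(1\ast 1)(p^a)$, whose inverse is $(\mu\ast\mu)(p^a)$, with values $1,-2,1,0,0,\ldots$ matching (\ref{eq:betafunc}). When $p\mid n/d^2$ the local factor of $\alpha$ degenerates to the constant multiplicative function and its inverse is $\mu$ itself, with values $1,-1,0,0,\ldots$, again matching (\ref{eq:betafunc}). The vanishing statement is then automatic: if $\gcd((N/\cond(\chi))^2,n^2,N)$ is not squarefree, some prime $p$ satisfies $p^2\mid N/\cond(\chi)$ and $p\mid n$; at that prime every candidate local factor in the inversion vanishes, forcing $\text{Tr}T_n^\chi|\sknew(N,\chi)=0$.

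The main obstacle is the explicit computation of the $T_p^\chi$-matrix on the oldform basis $\{f(p^jz)\}$ in the mixed regime where $p\mid\cond(\chi)$, or where $p\mid M$ so that $f$ is already $p$-new and $a_p(f)=0$ while $U_p$ nonetheless mixes higher lifts. Once these local matrices are tabulated and their traces recorded as functions of $\nu_p(n)$, the remainder of the argument is a mechanical bookkeeping of Dirichlet convolution on the divisor lattice of $N$, out of which the precise values of $\beta_m(p^a)$ drop prime-by-prime.
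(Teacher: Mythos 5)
Your overall route differs from the paper's: the paper simply quotes \cite[Theorem 13.5.7]{CohenStromberg}, which already gives the newform trace as a sum of full-space traces with the $\beta$ weights and the $\chi(d)d^{k-1}$ corrections, invokes \cite{AtkinLi} for the vanishing statement, and then changes the order of summation to reach the stated form. Re-deriving that input from the Atkin--Lehner decomposition and the $U_p$-action on oldclasses, as you propose, is legitimate in principle, but your execution has concrete errors. First, your forward relation is misstated. For a newform $f$ of level $M$ with $p\nmid M$, $p\mid N$, the trace of $U_p^{a}$ on the span of $f(p^jz)$ is $a_{p^a}(f)-\chi'(p)p^{k-1}a_{p^{a-2}}(f)$ (with $\chi'$ the primitive character attached to $\chi$), and this correction occurs for \emph{every} such prime with $p^2\mid n$ and $p\nmid\cond(\chi)$ --- not only when $p\mid\mid N$, and it carries a minus sign. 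The restriction $p\mid\mid N$ in $\mathcal{P}(N,\chi,n)$ is a feature of the \emph{inverted} identity (for $\nu_p(N)\ge 2$ the corrections cancel in the difference of consecutive levels), not of the forward one; and the sign of the $\chi(d)d^{k-1}$ term flips under inversion. As written, your forward identity is false, and mechanically inverting it would produce the wrong sign on (or omit) the $d$-sum, so the agreement of your final answer with the lemma is not actually established.

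Second, the inversion is not a clean local Dirichlet-convolution inverse, because the forward relation couples different Hecke indices ($n$ and $n/p^2$) and couples the support of $d$ to $M$ (through the condition $p\nmid M$); one is inverting a triangular system in both the level exponent and the index, not convolving by a single arithmetic function. Your sketch does not address this, and it is exactly where the set $\mathcal{P}$ and the positive $\chi(d)d^{k-1}$ coefficients of the lemma are produced. Finally, the vanishing statement is not ``automatic'' from the inversion: the local weights $\beta_{n/d^2}(p^a)$ equal $1,-1$ there and do not vanish. What is needed (and what the paper cites) is the Atkin--Li fact that if $p\mid(n,N/\cond(\chi))$ and $p^2\mid N$ then every newform in $\sknew(N,\chi)$ has $p$-th Hecke eigenvalue $0$, whence the trace of $T_n$ vanishes; some structural input of this kind must be supplied and cannot be read off from the convolution combinatorics alone.
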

\begin{proof}
In \cite[Theorem 13.5.7]{CohenStromberg} we get the equation:
\begin{equation}
\text{Tr}T_n^\chi|\sknew(N,\chi) = \sum_{\substack{M\mid N\\ \cond(\chi)\mid M}} \sum_{\substack{d\mid (\frac{M}{\cond(\chi)},N_1) \\ d^2 \mid n}} \chi(d) d^{k-1} \beta_{n/d^2}(N/M)\text{Tr}T_{n/d^2}^\chi|\skfull(M/d,\chi),
\end{equation}
where $N_1$ is the squarefree component of $N$ (the product of primes $p\mid \mid N$). In \cite{AtkinLi} we see that if $p\mid \left(n,\frac{N}{\cond(\chi)}\right)$ with $p^2\mid N$ then the $p$-th Hecke eigenvalue for any form in $\sknew(N,\chi)$ is 0, and so $\text{Tr}T_n^\chi|\sknew(N,\chi)=0$. When this is not the case for any $p$, the lemma follows from changing the order of summation.
\end{proof}
From Lemma \ref{lemmaequivalencesize} we get an equation giving the trace on any newform space in terms of traces on twist-minimal spaces. Let the $k$ function, and $\langle M,\psi \rangle$ notation be as in that lemma, then we get:
\begin{equation}
\text{Tr}T_n^\chi|\sknew(N,\chi) = \sum_{\langle M, \psi \rangle}2^{-k\left(\frac{N}{M} \chi, \psi \right)}\overline{\psi(n)}\text{Tr}T_n^{\chi \psi^2}|\skmin(M,\chi \psi^2).
\end{equation}
We invert this formula to give:
\begin{equation}
\label{minbynew}
\text{Tr}T_n^\chi|\skmin(N,\chi) = \sum_{\langle M,\psi \rangle}(-1)^{k'\left(\frac{N}{M}\right)}2^{-k\left(\frac{N}{M},\chi,\psi \right)}\overline{\psi(n)}\text{Tr}T_n^{\chi \psi^2}|\sknew(M,\chi \psi^2),
\end{equation}
where $k'\left(\frac{N}{M}\right)$ is the number of primes dividing $\frac{N}{M}$.

We now see that multiplicativity results in the trace formula carry through to the twist-minimal trace formula. Let $f(N,\chi)$ be a function defined on $N \in \mathbb{N}$ and $\chi$ level $N$. Fix $n,m \in \mathbb{N}$ and define $f^{\text{new}}$ and $f^{\min}$ as:
\begin{align}
f^{\text{new}}(N,\chi) &= \sum_{\substack{M\mid N \\ \cond(\chi)\mid M}} \ \beta_m(N/M)f(M,\chi)\\
f^{\min}(N,\chi) &= \sum_{\langle M, \psi \rangle}(-1)^{k'(N,M)}2^{-k(N,M,\chi,\psi)}\overline{\psi(n)}f^{\text{new}}(M,\chi\psi^2)
\end{align}
Let $f(N,\chi)$ be multiplicative, so that $f(N,\chi)=\prod_{p\mid N}f(p^{\nu_p(N)},\chi_p)$. Then $f^{\text{new}}$ and $f^{\min}$ are also multiplicative in this way.

We will show that each component of (\ref{traceformulaequation}) can be written as linear combinations of multiplicative functions. Thus, we construct the twist-minimal trace formula by computing each component's value at each prime for arbitrary twist-minimal character $\chi$. This provides an expression of the form:
\begin{equation}
\label{preformula}
\text{Tr}T_n^\chi|\skmin(N,\chi)=\sum_{d \in \mathcal{P}(N,\chi,n)}\chi_{\cond}(d)d^{k-1} (B_1-B_2-B_3+B_4).
\end{equation}
We then perform a further, similar sieve on $n$ to get an expression of the form:
\begin{equation}
\text{Tr}T_n^\chi|\skmin(N,\chi)=C_1-C_2-C_3+C_4,
\end{equation}
as in Theorem \ref{MainTheorem}.

Due to the definition of $\beta_m$ in Lemma \ref{newformequation}, the formulae for $p\mid (N,n)$ and $p\nmid (N,n)$ quickly diverge, and we treat them separately.

\subsection{Formula for $p\nmid (N,n)$ cases}
\label{sieving}

We start by establishing an explicit formula for $f^{\min}$ in terms of $f$ in the case that $\chi$ is twist-minimal.

Let $\underset{C}{\delta}$ be the characteristic function taking 1 when $C$ is true and 0 when $C$ is false. Let $\chi$ be a twist-minimal character of level $p^e$ and conductor $p^s$, as in Definition \ref{tmcdef}. From the definitions of $f^{\text{new}}$ and $f^{\min}$ we get:
\begin{equation}
\label{decompositionformula}
f^{\min}(p^e,\chi) =f(p^e,\chi) + \underset{s<e}{\delta} \left(\begin{split}
&\underset{s \le e-2}{\delta}f(p^{e-2},\chi)-2f(p^{e-1},\chi)\\
&-\frac{1}{2}\underset{\substack{p>2 \\ 2\mid e}}{\delta}\sum_{\substack{\cond(\psi)=p^{\frac{e}{2}}\\ \psi \not = \overline{\chi}}} \overline{\psi (n)} f(p^{e/2},\chi \psi^2)\\
&+  \underset{\substack{p>2 \\ s=0\\ e=2}}{\delta}\left( \frac{n}{p} \right)\left(f(1,\mathbbm{1})-\frac{f(p,\mathbbm{1})}{2}\right)\
\end{split}\right)
\end{equation}
This is what we will refer to as the decomposition formula.

We factor a multiplicative function from each term in (\ref{traceformulaequation}), and then apply the decomposition formula to get expressions for $B_i$ in (\ref{preformula}). In this section, when evaluating the multiplicative function at some prime $p\mid N$, we will assume that $p\nmid n$, with the cofactor case being handled in the next section. Note that this means we can assume $d=1$ in (\ref{preformula}).

At many points it will be useful to have the following lemma.
\begin{lemma}
\label{sumcharformula}
For any prime $p$ and $x,e \in \mathbb{N}$ we have:
\begin{equation}\sum_{\chi,\cond(\chi)=p^a}\chi(x)=\begin{cases}
0 & \text{if }\nu_p(x-1) < a-1 \\
\lfloor p^{a-2} \rfloor - p^{a-1} & \text{if }\nu_p(x-1)=a-1 \\
\lfloor p^{a-2} \rfloor - 2p^{a-1} + p^a & \text{if }\nu_p(x-1) \ge a\\
\end{cases}
\end{equation}
\end{lemma}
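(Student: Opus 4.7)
The plan is to reduce the target sum over primitive characters of conductor exactly $p^a$ to a standard orthogonality sum over all characters mod $p^b$ for $b \le a$, then recover the primitive piece by Möbius inversion. Every Dirichlet character mod $p^a$ is induced from a unique primitive character of conductor $p^b$ for some $b \le a$, so writing
$$g(b) = \sum_{\cond(\chi)=p^b} \chi(x), \qquad h(b) = \sum_{\chi \bmod p^b} \chi(x),$$
we obtain $h(a) = \sum_{b=0}^{a} g(b)$ (assuming $(x,p)=1$; the case $p \mid x$ makes every term with $b \ge 1$ vanish and is handled separately). Standard orthogonality evaluates $h(b) = \phi(p^b)\,\mathbbm{1}[x \equiv 1 \pmod{p^b}]$.

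Möbius inversion on the lattice of $p$-power divisors gives $g(a) = \sum_{b=0}^{a} \mu(p^{a-b}) h(b)$, which collapses for $a \ge 1$ to $g(a) = h(a) - h(a-1)$ because $\mu(p^j) = 0$ for $j \ge 2$. Substituting the orthogonality evaluation,
$$g(a) = \phi(p^a)\,\mathbbm{1}[x \equiv 1 \pmod{p^a}] - \phi(p^{a-1})\,\mathbbm{1}[x \equiv 1 \pmod{p^{a-1}}].$$
The three cases of the statement then fall out from examining which indicator is triggered by the value of $\nu_p(x-1)$: both vanish when $\nu_p(x-1) < a-1$, only the second contributes when $\nu_p(x-1) = a-1$, and both contribute when $\nu_p(x-1) \ge a$. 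Expanding $\phi(p^a) = p^a - p^{a-1}$ in each case gives the claimed expressions.

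The only genuine subtlety, and what I expect accounts for the cosmetic floor function in the statement, is the boundary case $a = 1$, where $\phi(p^{a-1}) = \phi(1) = 1$ rather than the naive extrapolation $p^{a-1} - p^{a-2} = 1 - p^{-1}$. The expression $\lfloor p^{a-2}\rfloor$ is precisely engineered so that it evaluates to $0$ at $a = 1$ and $p^{a-2}$ for $a \ge 2$, unifying both regimes into a single closed form. The orthogonality-plus-inversion argument is otherwise entirely routine, so I do not anticipate any further obstacle.
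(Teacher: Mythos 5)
Your argument is correct, and since the paper states this lemma without any proof, your telescoping computation is precisely the intended one: writing $g(a)$ for the sum over primitive characters of conductor $p^a$ and $h(b)$ for the full orthogonality sum mod $p^b$, the relation $g(a)=h(a)-h(a-1)$ together with $h(b)=\phi(p^b)$ or $0$ according to whether $x\equiv 1\pmod{p^b}$ yields all three cases, and you correctly identify that $\lfloor p^{a-2}\rfloor$ is there only to absorb the $a=1$ boundary (where $\phi(p^{a-1})=\phi(1)=1$). The one caveat worth recording explicitly is that the identity as displayed requires $(x,p)=1$: for $p\mid x$ and $a=1$ the left-hand side is $0$ while the formula gives $\lfloor p^{-1}\rfloor-p^{0}=-1$, so your ``handled separately'' remark should conclude that the closed form is asserted only for $x$ coprime to $p$ (which is how the lemma is applied throughout the paper, e.g.\ to $\chi(\sqrt{n})$ and $\chi(t/2)$ with $p\nmid n$).
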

\subsubsection{$A_1$ term}
This simple case will be useful in demonstrating the general sieving approach. We let $R$ denote the multiplicative factor of the component (which is different for each component, so any specific $R$ will not persist). In \ref{traceformulaequation} we have already that:
\begin{equation}
A_1=n^{k/2-1}\frac{k-1}{12}\prod_{p\mid N} R(p^e,\chi_p,n)
\end{equation}
where:
\begin{equation}
R(p^e,\chi,n)=\chi(\sqrt{n})\left(p^e+p^{e-1}\right).
\end{equation}
Note that the following expression in the decomposition formula simplifies:
\begin{equation}
\overline{\psi(n)}R(p^e,\chi\psi^2,n)=R(p^e,\chi,n)
\end{equation}
This makes it easy to evaluate $R^{\min}$, which is done on a case-by-case basis for each possible twist-minimal character. Applying Lemma \ref{sumcharformula} we get:
\begin{equation}
R^{\min}(p^e,\chi,n)=\chi(\sqrt{n})\begin{cases}
p^e+p^{e-1} & \text{if }s=e\\
\phi(p^{e-2})\frac{p^2-1}{1+\underset{2\mid e,p>2}{\delta}} & \text{if }s<e, e \ge 3\\
p^2-p-1 & \text{if }s<e, e=2, p=2\\
\frac{p^2-1}{2} & \text{if }s=1,e=2, p>2\\
\frac{(p-1)^2}{2} & \text{if }s=0,e=2, p>2\\
p-1 & \text{if }e=1\\
\end{cases}
\end{equation}
This is then rewritten more succinctly, so that $B_1$ in (\ref{preformula}) is given by:
\begin{equation}
B_1=n^{k/2-1}\chi(\sqrt{n})\frac{k-1}{12}\prod_{p\mid N}\begin{cases}\frac{\phi(\lceil p^{e-2}\rceil)(p-1)}{1+\underset{2\mid e, p>2}{\delta}}(1+\underset{e>1}{\delta}p+\underset{e=2}{\delta}(2s-2)) & \text{if }s<e\\
p^e+p^{e-1} & \text{if }s=e
\end{cases}
\end{equation}
\subsubsection{$A_3$ term}
We will focus on the $A_3$ in (\ref{traceformulaequation}) next, returning to $A_2$ as the most complicated component later. Again we extract a multiplicative factor $R$ which is distinct from the $R$ of the previous section. We write:
\begin{equation}
\sum_{\substack{c\mid N \\ (c,N/c)\mid (N/\cond(\chi),n/d-d)}} \phi((c,N/c))\chi(x_1)=\prod_{p\mid N} R(p^e,\chi_p,d,n),
\end{equation}
where, setting $\gamma=\nu_p(n/d-d)$ we have:
\begin{equation}
R(p^e,\chi,d,n)=(\chi(d)+\chi(n/d))\left(p^{\min(\lfloor \frac{e}{2}\rfloor,e-s,\gamma)}-\underset{\substack{\min(e-s,\gamma)\ge \frac{e}{2}\\ 2\mid e>0}}{\delta}\frac{p^{\frac{e}{2}}-p^{\frac{e}{2}-1}}{2}\right).
\end{equation}
Applying the decomposition formula (\ref{decompositionformula}) to $R$ we see considerable cancellation - although the expression for $R^{\text{new}}$ is quite complicated, the expression for $R^{\min}$ is very simple. We get:
\begin{equation}
R^{\min}(p^e,\chi,d,n)=(\chi(d)+\chi(n/d))\begin{cases}
\frac{\sqrt{2}^e}{8}\left(1-\underset{\gamma=\frac{e}{2}-1}{\delta}2\right) & \text{if }p=2, 2\mid e, \gamma \ge \frac{e}{2}-1,s<\frac{e}{2}\\
1 & \text{if }s=e\\
0 & \text{ otherwise}\\
\end{cases}
\end{equation}
and so $B_3$ is given by:
\begin{equation}
B_3=\sum_{\substack{d\mid n \\ d \le \sqrt{n}}}' d^{k-1}\prod_{p\mid N}(\chi_p(d)+\chi_p(n/d))\begin{cases}
\frac{\sqrt{2}^e}{4}\left(\frac{1}{2}-\underset{\gamma=\frac{e}{2}-1}{\delta}\right) & \text{if }p=2, 2\mid e, \gamma \ge \frac{e}{2}-1,s<\frac{e}{2}\\
1 & \text{if }s=e\\
0 & \text{ otherwise}\\
\end{cases}
\end{equation}
The dash on the sum here holds the same meaning as in the $A_3$ term in (\ref{traceformulaequation}).
\subsubsection{$A_4$ term}
The $A_4$ term is 0 whenever $k>2$. As the weight does not change in the decomposition formula, we can assume that $k=2$. We rewrite $A_4$ as:
\begin{equation}
A_4 = \sigma(n) \prod_{p\mid N} R(\chi_p),
\end{equation}
where:
\begin{equation}
R(\chi_p) = \underset{s=0}{\delta}\begin{cases}
1 & \text{if }p \nmid n\\
\frac{p^{\nu_p(n)}}{\sigma(p^{\nu_p(n)})} & \text{if }p \mid n
\end{cases}
\end{equation}
Note that for this section we are always in the top case. We again sieve $R$ through the decomposition formula, giving:
\begin{equation}
B_4=\underset{\chi=\mathbbm{1}}{\delta}\sigma(n)  \prod_{p\mid N} \begin{cases}
-1 & \text{if }e=1\\
0 & \text{ otherwise}
\end{cases}
\end{equation}
\subsubsection{$A_2$ term}
We follow the work of \cite{BookerLee} to rewrite the $A_2$ term multiplicatively. For $t^2<4n$, write $t^2-4n=d\ell^2$ with fundamental discriminant $d$ and $\ell \in \mathbb{N}$. Note that the sum over $f^2|(t^2-4n)$ is equivalent to taking the sum over $f\mid \ell$, discarding cases where $\nu_2(f)=\nu_2(\ell+1)$ and $4\mid d$, in which the sum over $x \pmod{N}$ is zero.

The class number formula (see \cite{Davenport}) leads to the following result from \cite{BookerLee}:
\begin{equation}
\frac{h(d\ell^2/f^2)}{w(d\ell^2/f^2)}= \frac{lh(d)}{fw(d)}\prod_{p\mid \frac{\ell}{f}}  \frac{p-\left(\frac{d}{p} \right)}{p}.
\end{equation}
Thus, the $A_2$ term becomes:
\begin{equation}
\begin{split}
A_2=&\frac{h(d)}{w(d)}\sum_{f\mid \ell} \frac{l}{f} \sum_{\substack{x \text{ mod }N \\ x^2-tx+n \equiv 0 \text{ (mod }N(N,f))}}\chi(x)\\
&\cdot \prod_{p\mid \ell}p^{\text{min}(e,\nu_p(f))-2}\left(p+\underset{\nu_p(f) \ge e>0}{\delta}\right)\left(p-\underset{\lfloor\frac{\gamma}{2}\rfloor>\nu_p(f)}{\delta}\left(\frac{d}{p}\right)\right)\\
\end{split}
\end{equation}
Let $g$ be some number dividing $(N,\ell)$. Define the function:
\begin{equation}
J(N,g,\chi,t,n) = \begin{cases}
\sum_{\substack{x \text{ mod }N \\ x^2-tx+n \equiv 0 \pmod{Ng}}}\chi(x) & \text{if }N>1\\
1 & \text{ otherwise}
\end{cases}
\end{equation}
Let $\Omega (N,g,t,n)$ be set of solutions mod $N$ to $x^2+tx-n \equiv 0 \pmod{Ng}$ so that $J(N,g,\chi,t,n)=\sum_{x \in \Omega(N,g,t,n)}\chi(x)$. From studying the output of the Chinese Remainder Theorem, we see that we can write $\Omega$ as a Cartesian product:
\begin{equation}
\Omega(N,g,t,n)=\prod_{p\mid N}\Omega(p^{\nu_p(N)},p^{\nu_p(g)},t,n)
\end{equation}
Consequently, $J$ is a multiplicative function. We arrive at the following expression for $A_2$:
\begin{equation}
\begin{split}
A_2=\frac{h(d)}{w(d)}\sum_{f\mid \ell} \frac{l}{f}&\prod_{p\mid N} J(p^e,p^{\min(\nu_p(f),e)},\chi_p,t,n)\\
\cdot&\prod_{p\mid \ell}p^{\text{min}(e,\nu_p(f))-2}\left(p+\underset{\nu_p(f) \ge e>0}{\delta}\right)\left(p-\underset{\lfloor \frac{\gamma}{2} \rfloor>\nu_p(f)}{\delta}\left(\frac{d}{p}\right)\right)
\end{split}
\end{equation}
We rewrite this as one product over $\ell N$, swapping the sum and product to give:
\begin{equation}
A_2=\frac{h(d)}{w(d)}\prod_{p\mid \ell N}R_p(p^e,\chi_p,t,n)
\end{equation}
where:
\begin{equation}
\begin{split}
R_p(p^e,\chi_p,t,n)=&\sum_{j=0}^{\nu_p(\ell)}p^{\min(\nu_p(\ell)+e-j,\nu_p(\ell))-2}\left(p+\underset{j \ge e >0}{\delta} \right)\left(p-\underset{\lfloor \frac{\gamma}{2} \rfloor>j}{\delta}\left(\frac{d}{p}\right)\right)\\
&\cdot J(p^e,p^{\min(j,e)},\chi_p,t,n)
\end{split}
\end{equation}
Sieving over $N$ we get:
\begin{equation}
B_2 = \frac{h(d)}{w(d)}\prod_{\substack{p\mid \ell \\ p\nmid N}}R_p(1,\mathbbm{1},t,n)\prod_{p\mid N}R_p^{\min}(p^e,\chi_p,t,n),
\end{equation}
where:
\begin{equation}
\label{Sp1}
R_p(1,\mathbbm{1},t,n) = p^{\nu_p(\ell)}+\left(1-\left(\frac{d}{p}\right)\right)\frac{p^{\nu_p(\ell)}-1}{p-1}.
\end{equation}
We now find an explicit expression for $R_p^{\min}$ by sieving. This first requires an explicit expression for $R_p$ in the case that $\chi$ is twist-minimal. From now on, set $\gamma = \nu_p(t^2-4n)$ with prime $p$ being clear from context. Also, for any $a$ such that $(a,\cond(\chi))=1$, we let $\chi(1/a)$ mean $\chi(b)$ where $ab \equiv 1 \pmod{\cond(\chi)}$.

We begin by finding explicit expressions for the $J$ function.
\begin{lemma}
Let $\chi_p$ be a character level $p^e$. Let $\beta \in \mathbb{Z}_{\ge 0}$ be such that $\beta \le e$ and $\beta \le \nu_p(\ell)$ where $t^2-4n=d\ell^2$ with fundamental discriminant $d$. For any prime $p>2$ we get:
\begin{equation}
J(p^e,p^\beta,\chi_p,t,n)= \begin{cases}
\chi(t/2)p^{\lfloor \frac{e-\beta}{2} \rfloor}&\text{if }\substack{\cond(\chi) \le p^{\lceil \frac{e+\beta}{2} \rceil}\\\gamma \ge e+\beta}\\
\\
p^{\gamma/2-\beta}(\chi(t/2+up^{\gamma/2}/2) & \text{if }\substack{\left(\frac{d}{p}\right)=1 \\ \gamma <e+\beta , \cond(\chi) \le p^{e+\beta - \gamma/2}}\\
\; \; \; \; +\chi(t/2-up^{\gamma/2}/2))\\
\\
0 & \text{else}
\end{cases}
\end{equation}
where $u$ is any possible value satisfying $u^2 \equiv \frac{t^2-4n}{p^\gamma} \pmod{p^{e+\beta-\gamma}}$. For $p=2$, if $2\nmid t$ then $J=0$. Assuming $2\mid t$ we get:
\begin{equation}
J(2^e,2^\beta,\chi_2t,n)= \begin{cases}
\chi(t/2)2^{\lfloor \frac{e-\beta}{2} \rfloor}&\text{if }\substack{\cond(\chi) \le 2^{\lceil \frac{e+\beta}{2} \rceil}\\ \gamma \ge e+\beta+2}\\
\\
2^{\gamma/2-\beta-1}\chi(t/2+2^{\gamma/2-1}) & \text{if }\substack{d\equiv1,4,5\mod 8,\; e+\beta+2-\gamma=1 \\ \cond(\chi) \le 2^{\gamma/2}}\\
\\
2^{\gamma/2-\beta}(\chi(t/2+2^{\gamma/2-1})& \text{if }\substack{d \equiv 1,5 \mod 8,\; e+\beta+2-\gamma=2 \\ \cond(\chi) \le 2^{\gamma/2}}\\
\\
2^{\gamma/2-\beta+1}(\chi(t/2+u2^{\gamma/2-1}) & \text{if }\substack{d \equiv 1 \mod 8,\;e+\beta+2-\gamma \ge 3 \\ \cond(\chi) \le 2^{\gamma/2}}\\
\\
2^{\gamma/2-\beta}(\chi(t/2+u2^{\gamma/2-1})\\
\; \; \; \; \; \; +\chi(t/2-u2^{\gamma/2-1})) & \text{if }\substack{d \equiv 1 \mod 8,\;e+\beta+2-\gamma \ge 3 \\ 2^{\gamma/2+1} < \cond(\chi) \le 2^{e+\beta-\gamma/2}}\\
\\
0 & \text{ else }
\end{cases}
\end{equation}
\end{lemma}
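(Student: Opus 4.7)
The plan is to evaluate $J$ by completing the square in the congruence $x^2 - tx + n \equiv 0 \pmod{p^{e+\beta}}$ defining $\Omega$, then applying Hensel-style lifting together with orthogonality of characters. For odd $p$, $2$ is a unit modulo $p^e$, so the substitution $y = x - t/2$ is a bijection turning the congruence into $y^2 \equiv (t^2-4n)/4 \pmod{p^{e+\beta}}$, with the $p$-adic valuation of the right-hand side equal to $\gamma$. I would split on whether $\gamma \ge e+\beta$: in this range the congruence collapses to $y \equiv 0 \pmod{p^{\lceil (e+\beta)/2\rceil}}$, producing $p^{\lfloor (e-\beta)/2 \rfloor}$ residues of $y$ modulo $p^e$, and the sum of $\chi_p(y + t/2)$ over this coset equals $\chi_p(t/2) p^{\lfloor (e-\beta)/2 \rfloor}$ under the bound $\cond(\chi_p) \le p^{\lceil (e+\beta)/2\rceil}$, vanishing otherwise by orthogonality (equivalently, by Lemma \ref{sumcharformula}).

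In the complementary range $\gamma < e + \beta$ and still for odd $p$, solvability forces $\gamma$ even and $(d/p) = 1$; by Hensel the two square roots of $(t^2-4n)/4$ take the form $\pm u p^{\gamma/2}/2$ modulo $p^{e+\beta-\gamma/2}$, with $u$ as in the statement, and each lifts to $p^{\gamma/2-\beta}$ residues of $y$ modulo $p^e$. The character is constant on each lift whenever $\cond(\chi_p) \le p^{e+\beta-\gamma/2}$, yielding the displayed closed form; the remaining subcases (non-solvability because $(d/p) \in \{0,-1\}$, or conductor too large) all contribute $0$ by either direct absence of solutions or orthogonality on a subgroup where $\chi_p$ acts non-trivially.

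For $p = 2$ with $2 \mid t$, the same strategy applies after writing $t = 2t_0$ and substituting $y = x - t_0$, reducing to $y^2 \equiv t_0^2 - n \pmod{2^{e+\beta}}$. The new feature is that a unit square in $\mathbb{Z}/2^m\mathbb{Z}$ for $m \ge 3$ has four square roots rather than two, and the precise $2$-adic depth at which these lift is governed by the residue of $(t^2-4n)/2^\gamma$ modulo $8$. Splitting on that residue class and on $e+\beta+2-\gamma \in \{1, 2, \ge 3\}$ produces the five displayed subcases. For $p = 2$ with $2 \nmid t$, I would verify $J = 0$ directly: the involution $x \mapsto t - x$ preserves the solution set and exchanges odd with even residues, so that after removing the even residues (on which $\chi_2 = 0$) the remaining contributions cancel against the $\chi_2$-values of their Hensel counterparts, forcing the entire character sum to vanish.

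Main obstacle: The $p = 2$ analysis, particularly the five-way subcase split. The four-root structure of $\mathbb{Z}/2^m\mathbb{Z}$-squares forces a finer Hensel analysis than for odd primes: for each mod-$8$ class of $d$ one must determine at which $2$-adic depth the square roots split, then match that depth against $\cond(\chi_2)$ to decide whether $\chi_2$ acts trivially on the lift or whether orthogonality kills the subsum. Each of the five displayed formulas corresponds to a distinct such configuration, and while no single step is conceptually deep, the careful bookkeeping among $\gamma$, $e$, $\beta$, and $\cond(\chi_2)$ comprises the technical heart of the proof.
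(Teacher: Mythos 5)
Your treatment of odd $p$ and of $p=2$ with $2\mid t$ is essentially the route the paper takes: solve the quadratic congruence explicitly (completing the square is just the paper's condition $2x \equiv t \pm u p^{\gamma/2}$ rewritten), count the lifts of the solutions, and decide constancy versus vanishing of the character sum by comparing $\cond(\chi_p)$ with the modulus of the coset on which the solutions lie; the finer split at $p=2$ according to $d \bmod 8$ and the size of $e+\beta+2-\gamma$ is exactly the paper's case analysis, which the paper likewise compresses into ``summation over these elements, simplifying by appeals to orthogonality.''

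The one genuine flaw is your argument that $J=0$ when $p=2$ and $2\nmid t$. The involution $x \mapsto t-x$ does preserve the solution set and does pair odd residues with even ones, but this yields no cancellation: the even residues contribute $0$ because $\chi_2$ vanishes there, and nothing forces the values of $\chi_2$ at the odd residues to cancel against anything. Indeed, as you have stated it (without invoking $2\nmid n$) the claim is false: for $t=3$, $n=2$ the congruence $x^2-3x+2\equiv 0 \pmod{2^{e+\beta}}$ has the odd solution $x\equiv 1$, so $J=\chi_2(1)+\chi_2(2)=1\neq 0$. The correct argument, which is the paper's one-liner, uses the standing assumption of this section that $p\nmid n$: with $t$ and $n$ both odd, $x^2-tx+n \equiv x(x-1)+1 \equiv 1 \pmod 2$, so the solution set is empty and $J=0$ trivially. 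With that repair, your outline agrees with the paper's proof.
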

\begin{proof}
First, assume that $p>2$. If $\gamma \ge e+\beta$ then $t^2-4n \equiv 0 \text{ (mod }p^{e+\beta})$ and so we need $2x \equiv t \text{ (mod }p^{\lceil \frac{e+\beta}{2} \rceil})$. This gives $p^{\lfloor \frac{e-\beta}{2}\rfloor}$ elements of $\Omega(p^e,p^\beta,t,n)$. If $\cond(\chi) \le p^{\lceil \frac{e-\beta}{2} \rceil}$ then $\chi(x)$ is equal for all $x \in \Omega$. If, however $\cond(\chi) > p^{\lceil \frac{e+\beta}{2} \rceil}$ then the sum is 0 by orthogonality.

If instead we have $\gamma < e+\beta$, then $t^2-4n \not \equiv 0 \text{ (mod }p^{e+\beta})$. If $2\nmid \gamma$ then $\Omega=\emptyset$. If $2\mid \gamma$ then $\frac{t^2-4n}{p^\gamma}$ has 2 roots mod $p^{e+\beta-\gamma}$. Denote one of them $u$, so that $\Omega(p^e,p^\beta,t,n)$ is all elements satisfying $2x \equiv t\pm up^{\gamma/2}$ mod $p^{e+\beta-\gamma/2}$. If $\cond(\chi)>p^{e+\beta-\gamma/2}$ then the sum is 0 by orthogonality, else we get $p^{\gamma/2-\beta}(\chi(t/2+up^{\gamma/2}/2)+\chi(t/2-up^{\gamma/2}/2))$. 
Finally, note that the two conditions $2\mid \gamma$ and $\left(\frac{(t^2-4n)/p^\gamma}{p} \right)=1$ can be combined as $\left( \frac{d}{p} \right)=1$.

Now, assume that $p=2$. If $2\nmid t$ then $x^2-tx+n \equiv x(x-1)+1 \equiv 1 \text{ (mod }2)$ so $\Omega=\emptyset$. Instead, assume $2\mid t$, and so $\chi(t/2)$ is well defined. When needed, write $t=2t'$.

If $\gamma \ge  e+\beta+2$ then $\Omega$ is $2^{\lfloor \frac{e-\beta}{2}\rfloor}$ elements, of the form $t'+kp^{\lceil \frac{e+\beta}{2} \rceil}$ where $k$ ranges through $[1,2^{\lfloor \frac{e-\beta}{2} \rfloor}]$.

If instead we have $\gamma < e+\beta$ then $2\nmid \gamma$ means $\Omega=\emptyset$. Otherwise, let $u_i$ be any root of $\frac{t^2-4n}{2^{\gamma}}$ mod $p^{e+\beta+2-\gamma}$. Elements in $\Omega$ are of the form $x=t'+u_i2^{\gamma/2-1}+k2^{e+\beta+1-\gamma/2}$. The result follows from summation over these elements, simplifying by appeals to orthogonality.\end{proof}
Now that we have expressions for $J$ we move on to evaluating $R$. We use the following formula, arrived at through application of the standard geometric series sum.
\begin{equation}
\label{geometricfloorformula}
\sum_{j=a}^b p^{\lfloor \frac{c-j}{2} \rfloor} = p^{c-\lfloor \frac{c+a}{2} \rfloor} + p^{c - \lfloor \frac{c+a-1}{2} \rfloor} -p^{\lfloor \frac{c-b}{2} \rfloor} - p^{\lfloor \frac{c-b+1}{2} \rfloor}
\end{equation}
Let $p>2$. In the sum defining $R$ we study the cases when $e<\nu_p(\ell)$, $\nu_p(\ell)<e\le \gamma$ and $\gamma < e$ separately. In each case, the resultant sum is evaluated using (\ref{geometricfloorformula}). We end up with the following summary of all cases. Set $h=\max(2s-1,e)$, then we get:
\begin{equation}
\begin{split}
&R_p(p^e,\chi_p,t,n)=\\
&\underset{\gamma \ge h }{\delta}\left(p^{e+\nu_p(l)-1}(p^{-\lfloor \frac{h}{2} \rfloor}+p^{-\lfloor \frac{h-1}{2} \rfloor})+\left( 1- \left(\frac{d}{p}\right)\right) \frac{p^{e-1}}{p-1}(p^{\nu_p(l)-\lfloor \frac{h}{2} \rfloor}+p^{\nu_p(l)-\lfloor \frac{h-1}{2} \rfloor}-p-1)\right)\chi(t/2) \\
&+\underset{\substack{\gamma<h\\ \left(\frac{d}{p}\right)=1}}{\delta}p^{\nu_p(l)+\text{min}(e-s,\nu_p(l))}\left(\chi(t/2+up^{\nu_p(l)})+\chi(t/2-up^{\nu_p(l)}))\right).
\end{split}
\end{equation}
This matches the formula in \cite[Equation 2.41]{BookerLeeStrombergsson}.

Recall that $\gamma=\nu_p(t^2-4n)$ and $h=\max(2s-1,e)$. When $p=2$ we split based on the relation between these quantities. In each case, we get a different result based on the congruence class of $d$ mod 8. Again, applying (\ref{geometricfloorformula}), we arrive at the following expression for all cases:
\begin{equation}
\begin{split}
&R_2(2^e,\chi,t,n)=\\
&\begin{cases}
\underset{d \equiv 1}{\delta}2^{\nu_2(\ell)+\min(e-s,\nu_2(\ell))}(\chi(t/2+u2^{\gamma/2-1})+\chi(t/2-u2^{\gamma/2-1})) & \text{if }\gamma<h\\
-3 \cdot 2^{e-1}\chi(t/2) & \text{if }\gamma=h=2s, 2\nmid d\\
3 \cdot 2^{e-1}\chi(t/2) & \text{if }\gamma=h>2s, 2\nmid d\\
-\left(\underset{d \not \equiv 4}{\delta} 2^{e+1} + \left(1-\left(\frac{d}{2}\right)\right)2^{e-1}\right)\chi(t/2)&\text{if }\gamma=2s>h\\
\\
\chi(t/2)\cdot \underset{2 \nmid d}{\delta} 2^{e-1}(2^{\nu_2(\ell)-\lfloor \frac{h}{2} \rfloor}+2^{\nu_2(\ell)-\lfloor \frac{h-1}{2} \rfloor})&\text{if }\gamma > \max(h,2s)\\
+\underset{\gamma>e+1 \text{ or }d\not \equiv 8}{\delta}\left(1-\left(\frac{d}{2}\right)\right)2^{e-1}(2^{\lfloor \frac{\gamma}{2} \rfloor - \lfloor \frac{h}{2} \rfloor}+2^{\lfloor \frac{\gamma}{2}\rfloor- \lfloor\frac{h-1}{2} \rfloor} - 3)\\
\end{cases}
\end{split}
\end{equation}
With expressions for $R_p$, this is now sieved using the decomposition formula (\ref{decompositionformula}). We treat the odd and even prime cases separately, and sieve to $R_p^{\text{new}}$ first, given by:
\begin{equation}
R_p^{\text{new}}(p^e,\chi,t,n)=R_p(p^e,\chi,t,n)-2R_p(p^{e-1},\chi,t,n)+\underset{s \le e-2}{\delta}R_p(p^{e-2},\chi,t,n)
\end{equation}
Let $p$ be an odd prime. We study the cases $e=1,e=2$ and $e\ge 3$ separately. The resultant expression for $R_p^{\text{new}}(p^e,\chi_p,t,n)$ is given by:
\begin{equation}
\begin{split}
R_p^{\text{new}}=
&\chi(t/2)\begin{cases}
p^{e-3}\left(\underset{2\mid e}{\delta}\left(p-\left(\frac{d}{p}\right)\right)p^{\nu_p(l)-\frac{e}{2}+1}(p-1)-\left(1-\left(\frac{d}{p}\right)\right)(p^2-1)\right) & \text{if }\gamma \ge e-1 \ge 2\\
p^{e-3}\left(1-p\left(\frac{d}{p}\right)\right) & \text{if }\gamma=e-2 \ge 1\\
-\left(1-\left(\frac{d}{p}\right)\right) & \text{if }e=1\\
p^{\nu_p(l)}(p-2-s)+\left(1-\left(\frac{d}{p}\right)\right)\frac{p^{\nu_p(l)}(p-2-s)-p^2+p+1+s}{p-1} & \text{if }\gamma \ge e=2\\
-1-s & \text{if } \gamma=1=e-1\\
0 & \text{otherwise}
\end{cases}\\
&-\underset{e=2,\gamma=0}{\delta}\begin{cases}
\chi(t/2+u)+\chi(t/2-u) & \text{if }s=1, \left(\frac{d}{p}\right)=1\\
\left(\frac{d}{p}\right) & \text{if }s=0\\
0 & \text{ otherwise}
\end{cases}\\
\end{split}
\end{equation}
Assuming, now, that $2\mid e$ we have a further sieve for $R_p^{\min}$. 
This is only considered when $p>2$ and $2\mid e$. We start by working out the sum over primitive characters. Note that we always have $\cond(\chi\psi^2)=p^{\frac{e}{2}}$ and so $h=e-1$. Using this, along with Lemma \ref{sumcharformula}, we get the following when $\gamma \ge e-1$: 
\begin{equation}
\begin{split}
\sum_{\psi,\cond(\psi)=p^{\frac{e}{2}}}\overline{\psi(n)}R_p(p^{\frac{e}{2}},\chi\psi^2,t,n)=&p^{e-3}(p-1)((p-1)2p^{\nu_p(l)-\frac{e}{2}+1}+\left(1-\left(\frac{d}{p}\right)\right)\\
&\cdot(2p^{\nu_p(l)-\frac{e}{2}+1}-p-1))\chi(t/2)\\
\end{split}
\end{equation}
When $\gamma < e-1$ and $\left(\frac{d}{p}\right)=1$ we get:
\begin{equation}
\sum_{\psi,\cond(\psi)=p^{\frac{e}{2}}}\overline{\psi(n)}R_p(p^{\frac{e}{2}},\chi\psi^2,t,n)=\begin{cases}
-2\phi(p^\gamma)\chi(t/2) & \text{if }\gamma=e-2\\
0 & \text{ otherwise}
\end{cases}
\end{equation}
We use these to find $R_p^{\min}$ when $e>2$, and then treat the $e=2$ case separately. In full, for any twist-minimal $\chi$ with $s<e$ and $p>2$ we get $R_p^{\min}(p^e,\chi,t,n)$ given by:
\begin{equation}
\underset{\substack{e=1 \\ \text{ or } \left(\frac{n}{p}\right)=1}}{\delta} \underset{\gamma \ge e-2}{\delta}\left(1-\left(\frac{d}{p}\right)\right)\frac{p^{e-3}}{(2,e)}\chi(t/2)\left(\underset{e>2}{\delta}+p\left(\underset{e=2}{\delta}(1-2s)+\underset{\substack{2\mid e \\ \gamma=e-2}}{\delta}-\underset{\gamma \ge e-1}{\delta}p\right)\right)
\end{equation}
When $p=2$ there are more character cases to consider. They are studied in the following categorisation, which is seen to be exhaustive:
\begin{itemize}
\item $s=\lfloor \frac{e}{2} \rfloor, e \ge 4$
\item $s \in \{0,2\}, e \ge \max(2s+2,3)$
\item $e < 3$
\end{itemize}
For each case, we further separate cases on the value of $\gamma$ and its relation to $e$. In full, we get:
\begin{equation}
R_p^{\min}=\left(1-\left(\frac{d}{2}\right)\right)\chi(t/2)\lceil 2^{e-3}\rceil \begin{cases}
-3 & \text{if }\gamma > e, e \ge 3\\
(-1)^e+2 & \text{if }\gamma=e,s=\lfloor \frac{e}{2}\rfloor, e \ge 4 \\
2(-1)^{d+1}+1 & \text{if }\gamma=e-1,2\nmid e, s=\lfloor \frac{e}{2} \rfloor, e \ge 4\\
2(-1)^{d}-1 & \text{if }\gamma \in \{e,e-1\}, s<\lfloor \frac{e}{2} \rfloor, e \ge 3\\
-1 & \text{if }\gamma \ge e, e \in \{1,2\}\\
\frac{1}{2} & \text{if }e=2, \gamma=0\\
-1 & \text{if }e=1, \gamma=0\\
\end{cases}
\end{equation}
When $s=e$ there is no sieving, and so we just apply this special case to the original expressions for $R$. When $p>2$ we get:
\begin{equation}
R_p^{\min}=\begin{cases}
\chi_p(t/2)\left(2p^{\nu_p(\ell)}+\left(1-\left(\frac{d}{p}\right)\right)\frac{2p^{\nu_p(\ell)}-p^e-p^{e-1}}{p-1}\right) & \text{if }s=e,\gamma \ge 2e-1 \\ \\\underset{\left(\frac{d}{p}\right)=1}{\delta}p^{\nu_p(\ell)}(\chi_p(\frac{t+u}{2})+\chi_p(\frac{t-u}{2})) & \text{if }s=e,\gamma<2e-1\\
\text{where }u\equiv l\sqrt{d} \pmod{p^{e+\nu_p(\ell)}} \\ 
\end{cases}
\end{equation}
and when $p=2$ we get:
\begin{equation}
R_p^{\min}=\begin{cases}
\left((2^{\lfloor \frac{\gamma}{2}\rfloor+1}-3\cdot2^{e-1})(1-\left(\frac{d}{2}\right))+\underset{2\nmid d}{\delta}2^{\nu_2(\ell)+1}\right)\chi_2(t/2) & \text{if }\gamma>2e\\ \\
-\left(2^{e-1}\left(1-\left(\frac{d}{2}\right)\right)+\underset{2\nmid d}{\delta}2^{\nu_2(\ell)+1}\right)\chi_2(t/2) & \text{if }\gamma=2e\\ \\
2^{\nu_2(\ell)}(\chi_2\left(\frac{t+u}{2}\right)+\chi_2\left(\frac{t-u}{2}\right)) & \text{if }\gamma<2e-1,\left(\frac{d}{2}\right)=1\\
\text{where }u\equiv l\sqrt{d} \pmod{p^{e+\nu_p(\ell)+2}} \\ 
0 & \text{otherwise}
\end{cases}
\end{equation}
\subsection{Formula for $p\mid (N,n)$}
We now address the case that $p$ divides both $n$ and $N$, finding expressions for the multiplicative part of each term (which we have denoted $R$ in each case). From Lemma \ref{newformequation} we see that if $s<e$ and $e>1$ then the trace is 0. As such, we only consider terms when $s=e$ or $e=1$. Noting that $\beta_m$ is now equivalent to the M\"{o}bius $\mu$ function, we get the much simpler decomposition formula:
\begin{equation}
\label{decompositionformulacofactor}
f^{\min}(p^e,\chi) =f(p^e,\chi) - \underset{s<e}{\delta} f(1,\mathbbm{1})
\end{equation}

For the $A_1$ term, if $s>0$ then as $p\mid n$ we necessarily have $\chi(\sqrt{n})=0$. Thus, we only have a non-zero $A_1$ term when $e=1$ and $s=0$. In this case, we get $R_p^{\min}=-1$ and so in full:
\begin{equation}
R_p^{\min}(p,\chi,n)=-\underset{s=0}{\delta}
\end{equation}
For the $A_2$ term, depending on $\gamma$, we have at most two elements in $\Omega$. If $\gamma=\nu_p(t^2-4n)$ is non-zero, then we must have $p\mid x$ whereupon $\chi(x)=0$. On the other hand, if $\gamma=0$ then we get $J=\chi_p\left(\frac{t+u}{2}\right)+\chi_p\left(\frac{t-u}{2}\right)$. We get:
\begin{equation}
R_p^{\min}(p^e,\chi,t,n)=\begin{cases}
-\left(p^{\nu_p(\ell)}+\left(1-\left(\frac{d}{p}\right)\right)\frac{p^{\nu_p(\ell)}-1}{p-1}\right) & \text{if }s=0,\gamma>0\\
\chi_p\left(\frac{t-u}{2}\right)+\chi_p\left(\frac{t+u}{2}\right) & \text{if }s=e, \gamma=0 \\
0 & \text{ otherwise}
\end{cases}
\end{equation}
For the $A_3$ term note that $\chi(x_1)\not =0$ for either $c=1$ or $c=p$ if and only if $p\nmid d$ or $p \nmid n/d$, which is subsequently true if and only if $\nu_p(n/d-d)=0$. Using this to evaluate, we get:
\begin{equation}
R_p^{\min}(p^e,\chi,d,n)=\chi(d)+\chi(n/d)-\underset{s=0}{\delta}
\end{equation}
Finally for the $A_4$ term we get:
\begin{equation}
R_p^{\min}(p,n)=\underset{s=0}{\delta}\left(\frac{p^{\nu_p(n)}}{\sigma(p^{\nu_p(n)})}-1\right)
\end{equation}
\subsection{Factorising over $n$}
The preceding subsections allow us to produce a formula in the form of (\ref{preformula}),
where each $B_i$ is evaluated on the arguments $(\frac{N}{d},\chi,k,\frac{n}{d^2})$. We now incorporate the outer sum by swapping the sum and product in each term.

For the $B_1$ term, swapping gives:
\begin{equation}
\begin{split}
C_1=&\prod_{\substack{p\mid N \\ p \nmid n}}\begin{cases}
p^e+p^{e-1}& \text{if }s=e\\
\frac{\phi (\lceil p^{e-2}\rceil)(p-1)}{1+\underset{2\mid e,p>2}{\delta}}(1+\underset{e>1}{\delta}p+\underset{e=2}{\delta}(2s-2))&\text{if }s<e
\end{cases}\\
\cdot & \sum_{d \in \mathcal{P}}d \frac{n^{k/2-1}(k-1)\chi_{\cond}(\sqrt{n})}{12}
\prod_{\substack{p\mid \frac{N}{d}\\ p\mid \frac{n}{d^2}}}(-1)
\end{split}
\end{equation}
Then, using multiplicativity of $\mathcal{P}$ as defined in (\ref{mathcalp}), we evaluate the inner sum, giving:
\begin{equation}
C_1=\frac{n^{k/2-1}(k-1)\chi_{\cond}(\sqrt{n})}{12}\prod_{p\mid N}\begin{cases}
p^e+p^{e-1}& \text{if }s=e\\
\frac{\phi (\lceil p^{e-2}\rceil)(p-1)}{1+\underset{2\mid e,p>2}{\delta}}(1+\underset{e>1}{\delta}p+\underset{e=2}{\delta}(2s-2))&\text{if }s<e\\
\end{cases}
\end{equation}
For $B_3$ swapping the order of summation and substituting $md$ for $m$ gives:
\begin{equation}
\begin{split}
C_3 = &\sum_{\substack{m\mid n \\ m\le \sqrt{n}}}' m^{k-1} \sum_{\substack{d\in \mathcal{P}(N,n,\chi)\\ d\mid (m,n/m)}}\\
\cdot&\prod_{p\mid \frac{N}{d}}\begin{cases}
\frac{\sqrt{2}^e \left(\chi_p(m)+\chi_p\left(n/m\right)\right)}{8}(1-\underset{\gamma=\frac{e}{2}-1}{\delta}2) & \text{if }p=2, 2\mid e, \gamma \ge \frac{e}{2}-1 \ge s, 2\nmid \frac{n}{d^2}, e>2\\
\chi_p(m)+\chi_p\left(n/m\right)-\underset{s=0}{\delta} & \text{if }s=e \text{ or }p\mid \frac{n}{d^2}\\
0 & \text{ otherwise}\\
\end{cases}\\
\end{split}
\end{equation}
Noting that if $p\nmid n$ then the product evaluates identically for all $d$, and using multiplicativity of $\mathcal{P}$ we get:
\begin{equation}
C_3=\sum_{\substack{d\mid n \\ d \le \sqrt{n}}}' d^{k-1}\prod_{p\mid N}\begin{cases}
\frac{\sqrt{2}^e (\chi_p(d)+\chi_p(n/d))}{8}(1-\underset{\gamma=\frac{e}{2}-1}{\delta}2) & \text{if }p=2, 2\mid e, \gamma \ge \frac{e}{2}-1 \ge s, 2\nmid n, e>2\\
\chi_p(d)+\chi_p(n/d) & \text{if }s=e\\
0 & \text{ otherwise}\\
\end{cases}\\
\end{equation}
For the $B_4$ contribution we get:
\begin{equation}
\begin{split}
C_4=&\underset{k=2,\chi=\mathbbm{1}}{\delta}\prod_{\substack{p\mid N \\ p\nmid n}} \mu(p^e) \sum_{d \in \mathcal{P}(N,n,\chi)} d\sigma(n/d^2)\prod_{p\mid \left(\frac{N}{d},\frac{n}{d^2}\right)}\left(\frac{p^{\nu_p(n/d^2)}}{\sigma(p^{\nu_p(n/d^2)})}-1\right)\\
=&\underset{k=2, \chi=\mathbbm{1}}{\delta}\mu(N)\prod_{\substack{p \mid n \\ p \nmid N}}\sigma(p^{\nu_p(n)})
\end{split}
\end{equation}
For the $B_2$ term, we follow the same procedure as above. Noting that $\gamma$ is unchanged by substituting $n/d^2$ and $t/d$ for $n$ and $t$ respectively, we get:
\begin{equation}
R_p^{\min}(p^e,\chi,t,n)=\begin{cases}
\left(\frac{d}{p}\right)-1 & \text{if }\gamma>s=0\\
\chi_p\left(\frac{t-u}{2}\right)+\chi_p\left(\frac{t+u}{2}\right) & \text{if }s=e, \gamma=0 \\
0 & \text{ otherwise}
\end{cases}
\end{equation}
These $C_i$ terms are now exactly as in Theorem \ref{MainTheorem}.
\pagebreak
\bibliography{papers}
\bibliographystyle{plain}

\end{document}